\documentclass{amsart}

\usepackage{amsmath}
\usepackage{amssymb}
\usepackage{amsthm}

\numberwithin{equation}{section}

\newtheorem{theorem}              {Theorem}[section]
\newtheorem{lemma}       [theorem]{Lemma}
\newtheorem{proposition} [theorem]{Proposition}
\newtheorem{corollary}   [theorem]{Corollary}
\newtheorem{definition}  [theorem]{Definition}
\newtheorem{remark}      [theorem]{Remark}

\newtheorem{example}     [theorem]{Example}

\newcommand{\cmdrhu}          {\rightharpoonup}

\newcommand{\cmdYD}           {Y\!D}
\newcommand{\cmdcalA}         {\mathcal{A}}

\newcommand{\cmdcalD}         {\mathcal{D}}
\newcommand{\cmdcalE}         {\mathcal{E}}

\newcommand{\cmdcalI}         {\mathcal{I}}

\newcommand{\cmdcalR}         {\mathcal{R}}
\newcommand{\cmdcalS}         {\mathcal{S}}
\newcommand{\cmdcalYD}        {\mathcal{YD}}

\newcommand{\cmdbbA}          {\mathbb{A}}
\newcommand{\cmdbbD}          {\mathbb{D}}

\newcommand{\cmdbbQ}          {\mathbb{Q}}

\newcommand{\cmdbbm}          {\mathbf{m}}

\newcommand{\cmdfrakB}        {\mathfrak{B}}
\newcommand{\cmdfraka}        {\mathfrak{a}}

\newcommand{\cmdcop}          {\mathrm{cop}}

\newcommand{\cmdch}           {\mathop{\mathrm{ch}}}

\newcommand{\cmddet}          {\mathop{\mathrm{det}}}
\newcommand{\cmddiag}         {\mathop{\mathrm{diag}}}
\newcommand{\cmdgr}           {\mathop{\mathrm{gr}}}
\newcommand{\cmdid}           {\mathop{\mathrm{id}}}
\newcommand{\cmdmod}          {\mathop{\mathrm{mod}}}

\newcommand{\cmdres}          {\mathop{\mathrm{res}}}
\newcommand{\cmdth}           {\mathop{\mathrm{th}}}

\newcommand{\cmdHom}          {\mathop{\mathrm{Hom}}}
\newcommand{\cmdKer}          {\mathop{\mathrm{Ker}}}

\newcommand{\cmddotrtimes}    {\mathop{\raisebox{0.2ex}{\makebox[0.86em][l]{${\scriptstyle>\mathrel{\mkern-4mu}\lessdot}$}}\raisebox{0.12ex}{$ \shortmid$}}}
\newcommand{\cmdbicross}      {\mathop{\Join}\nolimits}
\newcommand{\cmdarrow}        {\mathop{\longrightarrow}}
\newcommand{\cmdbarotimes}    {\mathop{\overline{\otimes}}}
\newcommand{\cmdbarbigotimes} {\mathop{\overline{\bigotimes}}}

\begin{document}

\title     [Quasitriangular pointed Hopf algebras]{On minimal quasitriangular pointed Hopf~ algebras}

\author    {Akira Masuoka}
\address   {Institute of Mathematics, University of Tsukuba, Ibaraki 305-8571, Japan}
\email     {akira@math.tsukuba.ac.jp}

\dedicatory{Dedicated to the memory of Professor Takayoshi Wakamatsu}
\thanks    {}
\subjclass [2000]{16W30, 17B37}
\keywords  {Hopf algebra, Nichols algebra, generalized quantum double, quasitriangular}

\date      {}

\begin{abstract}
  The quantized enveloping algebra $U_q$ is constructed as a quotient of the generalized quantum double $ U^{\leq 0}_q \cmdbicross_{\tau} U^{\geq 0}_q $ associated to a natural skew pairing $ \tau : U^{\leq 0}_q \otimes U^{\geq 0}_q \rightarrow k $.
  This double is generalized by
  \begin{equation*}
    \cmdcalD = ( \cmdfrakB ( V ) \cmddotrtimes F ) \cmdbicross_{\tau} ( \cmdfrakB ( W ) \cmddotrtimes G ),
  \end{equation*}
  where $F$, $G$ are abelian groups,
  $ V \in {}^F_F \cmdcalYD $, $ W \in {}^G_G \cmdcalYD $ are Yetter-Drinfeld modules and $ \cmdfrakB ( V ) $, $ \cmdfrakB ( W ) $ are their Nichols algebras.
  We prove some results on Hopf ideals of $ \cmdcalD $,
  including a characterization of what we call thin Hopf ideals.
  As an application we give an explicit description of those minimal quasitriangular pointed Hopf algebras in characteristic zero which are generated by skew primitives.
\end{abstract}

\maketitle

\setcounter{section}{0}

\section{Introduction}

Throughout the paper we work over a fixed field $k$.

To define the (standard) quantized enveloping algebra, $ U_q $, Joseph \cite{J} adopted an effective way.
That is
\begin{itemize}
  \item first to construct a natural skew pairing
  \begin{equation} \label{eq1.1}
    \tau : U^{\leq 0}_q \otimes U^{\geq 0}_q \rightarrow k
  \end{equation}
  between the non-positive and the non-negative parts of $U_q$,
  \item then to use this $ \tau $ to construct the generalized quantum double
  \begin{equation} \label{eq1.2}
    U^{\leq 0}_q \cmdbicross_{\tau}  U^{\geq 0}_q ,
  \end{equation}
  which deforms only by its product,
  the Hopf algebra $  U^{\leq 0}_q \otimes U^{\geq 0}_q $ of tensor product, and
  \item finally to divide this last double by some central grouplikes.
\end{itemize}

This construction can apply to all known analogues and generalizations of $ U_q $ which include especially the Frobenius-Lusztig kernel $u_q$,
a finite-dimensional analogue of $U_q$ at roots of unity.

Recall from the works (including \cite{AS1}, \cite{AS2}) by Andruskiewitsch and Schneider that the notion of Nichols algebras gives a sophisticated viewpoint to study quantized enveloping algebras.
Let $G$ be an abelian group,
and let $W$ be an object in the braided tensor category $ {}^G_G \cmdcalYD $ of Yetter-Drinfeld modules over $G$.
The {\it Nichols algebra} $ \cmdfrakB ( W ) $ of $W$ (see \cite{AS1}) is a braided graded Hopf algebra in $ {}^G_G \cmdcalYD $ which has a certain functorial property.
The bosonization of $ \cmdfrakB ( W ) $ by $G$ forms an ordinary graded Hopf algebra,
which we denote by $ \cmdfrakB ( W ) \cmddotrtimes G $;
the graded Hopf algebra of this form is characterized as a coradically graded pointed Hopf algebras generated by skew primitives.
We assume that $W$ is {\it of diagonal type} in the sense that it is a (direct) sum of one-dimensional subobjects.
The Nichols algebra $ \cmdfrakB ( W ) $ generalizes the plus part $ U^+_q $ of $ U_q $,
while $ \cmdfrakB ( W ) \cmddotrtimes G $ generalizes $ U^{\geq 0}_q $.
(To be more precise, we need to replace the Nichols algebra with the looser,
{\it pre-Nichols algebra} \cite{M1}, to include those $ U_q $ at roots of unity.)
Let $F$ be another abelian group, and let $V$ be an object in $ {}^F_F \cmdcalYD $ of diagonal type.
Let $ \tau_0 : F \times G \rightarrow k ^{\times} ( = k \setminus 0 ) $ be a bimultipicative map,
and suppose that $V$ and $W$ are, roughly speaking, dual to each other, compatibly with $ \tau_0 $.
Then by \cite{M1}, $ \tau_0 $ extends to a skew pairing
\begin{equation*}
  \tau : ( \cmdfrakB ( V ) \cmddotrtimes F ) \otimes ( \cmdfrakB ( W ) \cmddotrtimes G ) \rightarrow k
\end{equation*}
which fulfills some reasonable requirements;
this generalizes the $ \tau $ in (\ref{eq1.1}).
This generalized $ \tau $ deforms the tensor product of the Hopf algebras $ \cmdfrakB ( V ) \cmddotrtimes F $, $ \cmdfrakB ( W ) \cmddotrtimes G $ into
\begin{equation*}
  \cmdcalD = ( \cmdfrakB ( V ) \cmddotrtimes F ) \cmdbicross_{\tau} ( \cmdfrakB ( W ) \cmddotrtimes G ),
\end{equation*}
which generalizes the double given in (\ref{eq1.2}).
We call $ \cmdcalD $ the {\it generalized quantum double} associated to $ \tau $.

Note that $ \cmdfrakB ( V ) \cmddotrtimes F $ generalizes $ U^{\leq 0}_q $,
but $ \cmdfrakB ( V ) $ does not correspond to the minus part $ U^-_q $,
but to its image by the antipode.
Thus, our construction of $ \cmdcalD $ from a pair of Nichols algebras is more symmetric,
compared with the construction of the double (\ref{eq1.2}) which treats with $ U^{\pm}_q $.
This symmetry makes arguments much simpler, I believe.

If $ \cmdfrakB ( W ) \cmddotrtimes G $ is finite-dimensional, and $ \tau_0 $ is non-degenerate,
then $ \cmdcalD $ concides with the quantum double of $ \cmdfrakB ( W ) \cmddotrtimes G $ as Drinfeld originally defined,
which we denote by $ D ( \cmdfrakB ( W ) \cmddotrtimes G ) $.
It is quite noteworthy that in characteristic zero,
Heckenberger \cite{H} classified all objects $W$ of diagonal type such that $ \cmdfrakB ( W ) $ is finite-dimenstional.
Also, when $k$ is an algebraically closed field of characteristic zero,
Andruskiewitsch and Schneider \cite{AS2} classified all finite-dimensional pointed Hopf algebras $ A $,
necessarily including those of the form $ \cmdfrakB ( W ) \cmddotrtimes G $,
such that all prime divisors of the order $ \left| G ( A ) \right| $ of grouplikes are greater than $7$.

The detailed construction of $ \cmdcalD $ will be given in Section 3.
The preceding Section 2 is devoted to preliminaries on Yetter-Drinfeld modules and (pre-)Nichols algebras.
Our aim of this paper is to study Hopf ideals of $ \cmdcalD $,
and accordingly its quotient Hopf algebras.
Note that $ \cmdcalD $ is a pointed Hopf algebra in which the group $ G ( \cmdcalD ) $ of all grouplikes equals $ F \times G $.
Let $C$ be the subgroup of $ F \times G $ consisting of all central grouplikes in $ \cmdcalD $,
and let $ \cmdcalD_c = \cmdcalD / ( c - 1 \mid c \in C ) $ denote the associated, quotient Hopf algebra.
In Section 4, which is independent of the following sections,
we assume that $F$, $G$ are finite groups with $\tau_0$ non-degenerate,
and prove under some additional mild restriction that $ \cmdcalD $ is isomorphic as an algebra to the tensor product $ kC \otimes \cmdcalD_c $ if and only if $ CP = F \times G $,
where $ P = C^{\perp} $ denotes the orthogonal subgroup (with $F \times G$ identified with the dual group $ ( F \times G )^{\wedge} $ via $ \tau_0 $); see Theorem \ref{4.2}.
These equivalent conditions are described in terms of the associated Cartan matrix,
when $\cmdcalD$ is of a special form such that $ \cmdcalD_c $ is closely related to the Frobenius-Lusztig kernel $ u_q $;
see Corollary \ref{4.3}.
(These two results are generalized formulations of results by Leonid Krop \cite{Kr},
and will be proved in the generalized situation with $ \cmdfrakB ( V ) $, $ \cmdfrakB ( W ) $ replaced by pre-Nichols algebras.)
We say that a Hopf ideal $ \cmdfraka \subset \cmdcalD $ is {\it thin} if $ V \cap \cmdfraka = 0 = W \cap \cmdfraka $.
In Section 5 we describe, under some mild assumption, these thin Hopf ideals $ \cmdfraka \subset \cmdcalD $ and the corresponding quotient Hopf algebras $ \cmdcalD / \cmdfraka $; see Theorem \ref{5.4}.
The result says that $ \cmdfraka $ is of the form $ \cmdfraka = \cmdfraka ( T , Z , \zeta ) $, parametrized by three data,
$T$, $Z$, $ \zeta $; among these $T$ is a subgroup of $C$, and $ \cmdfraka ( T , 0 , 0 ) = ( t - 1 \mid t \in T ) $ if $Z$, $ \zeta $ are zero.
This result will be applied in the following two sections to prove Theorems \ref{6.1} and \ref{7.2}.
Theorem \ref{6.1} tells us that if $ \cmdcalD $ satisfies a `conectedness' assumption,
every Hopf ideal $ \cmdfraka $ of $ \cmdcalD $ is either so small that it is of the form $ \cmdfraka ( T , 0 , 0 ) $ (see above),
or so large that $ \cmdcalD / \cmdfraka $ is a group algebra.
As an application we describe all Hopf ideals of the quantized enveloping algebra $ U_q $ associated to such a symmetrizable Borcherds-Cartan matrix that is irreducible; see Example \ref{6.2}.
Theorem \ref{7.2} shows, assuming that $k$ is an algebraically closed field of characteristic zero,
that if $A$ is such a minimal quasitriangular pointed Hopf algebra that is generated by skew primitives,
then it is of the form $ \cmdcalD / \cmdfraka ( T , Z , \zeta ) $, where $ \cmdcalD $ is Drinfeld's quantum double $ D ( \cmdfrakB ( W ) \cmddotrtimes G ) $ of a finite-dimensional $ \cmdfrakB ( W ) \cmddotrtimes G $.
This formulation turns nicer, in virtue of \cite{AS2},
if we assume that all prime divisors of $ \left| G ( A ) \right| $ are greater than $7$; see Corollary \ref{7.3}.
In the last Section 8, we reproduce from Theorem \ref{7.2} Gelaki's classification result \cite{G} of minimal triangular pointed Hopf algebras in characteristic zero.

\section{Preliminaries}

\subsection{}

Let $J$ be a Hopf algebra with bijective antipode. The coproduct, the counit and the antipode (for any Hopf algebra) are denoted by
\begin{equation} \label{eq2.1}
  \Delta ( x ) = x_1 \otimes x_2 , \quad \varepsilon \quad \mbox{and} \quad \cmdcalS ,
\end{equation}
respectively.
Let $V$ be a {\it Yetter-Drinfeld module} over $J$.
This means that $V$ is a left $J$-module and left $J$-comodule,
whose structures we denote by
\begin{equation} \label{eq2.2}
  x \cmdrhu v \quad ( x \in J , v \in V ), \quad \rho ( v ) = v_{-1} \otimes v_0,
\end{equation}
and satisfies
\begin{equation*}
  \rho ( x \cmdrhu v ) = x_1 v_{-1} \cmdcalS ( x_3 ) \otimes ( x_2 \cmdrhu v_0 ) \quad ( x \in J , v \in V ).
\end{equation*}
Let $ {}_J^J \cmdcalYD $ denote the category of all Yetter-Drinfeld modules over $J$.
This naturally forms a braided tensor category.

Two objects $V$, $W$ in $ {}^J_J \cmdcalYD $ are said to be {\it symmetric} if the braidings
\begin{eqnarray*}
  c_{V , W} : V \otimes W \cmdarrow^{\simeq} W \otimes V , \quad v \otimes w \mapsto ( v_{-1} \cmdrhu w ) \otimes v_0 ,\\
  c_{W , V} : W \otimes V \cmdarrow^{\simeq} V \otimes W , \quad w \otimes v \mapsto ( w_{-1} \cmdrhu v ) \otimes w_0
\end{eqnarray*}
are inverses of each other.
One sees that $V$ includes the largest subobject,
which we denote by $V'$, such that $V'$ and $V$ are symmetric,
We call $V'$ the {\it commutant} of $V$.

By gradings we mean those by non-negative integers.
Let $ V \in {}^J_J \cmdcalYD $.
The tensor algebra $ T ( V ) $ on $ V $,
which is naturally a graded algebra,
uniquely turns into a braided graded Hopf algebra in $ {}^J_J \cmdcalYD $ in which all elements of $V$ are supposed to be primitives.
Since this $ T ( V ) $ is pointed irreducible as a coalgebra,
its braided bi-ideal is necessarily a braided Hopf ideal.
A quotient algebra $ T ( V ) / \cmdfraka $ is called a {\it pre-Nichols algebra} \cite{M1} of $V$,
if $ \cmdfraka $ is a homogeneous braided Hopf ideal of $ T ( V ) $ such that $ \cmdfraka \cap V = 0 $,
or in other words,
$ V \subset T ( V ) / \cmdfraka $.
This is characterized as such a braided graded Hopf algebra in $ {}^J_J \cmdcalYD $ that includes $V$ as 
a subobject consisting of degree 1 primitives,
and is generated by $V$.
A pre-Nichols albegra $ T ( V ) / \cmdfraka $ is called the {\it Nichols algebra} (see \cite{AS1}, \cite{AS2}) of $V$,
if $ \cmdfraka $ is the largest possible,
or equivalently if $V$ precisely equals the space $ P ( T ( V ) / \cmdfraka ) $ of all primitives in $ T ( V ) / \cmdfraka $.
The Nichols algebra of $V$ is uniquely determined by $V$, and is denoted by $ \cmdfrakB ( V ) $.
If two objects $V$, $W$ in $ {}^J_J \cmdcalYD $ are symmetric to each other,
then we have
\begin{equation} \label{eq2.3}
  \cmdfrakB ( V \oplus W ) = \cmdfrakB ( V ) \cmdbarotimes \cmdfrakB ( W ),
\end{equation}
where $ \cmdbarotimes $ denotes the braided tensor product in $ {}^J_J \cmdcalYD $.
This equation follows since the relation
\begin{equation*}
  w v = ( w_{-1} \cmdrhu v ) w_0 \quad ( v \in V , w \in W )
\end{equation*}
holds in $\cmdfrakB ( V \oplus W )$; see \cite[Proposition 2.1]{AS1}, for example.

Given a braided (graded) Hopf algebra,
e.g., a (pre-)Nichols algebra,
$R$ in $ {}^J_J \cmdcalYD $,
the bosonization (or the biproduct construction) due to Radford \cite{R1} gives rise to an ordinary (graded) Hopf algebra,
which we denote by
\begin{equation} \label{eq2.4}
  R \cmddotrtimes J .
\end{equation}

We are mostly intersted in the special case when $J$ is the group algebra $k G $ of an (abelian, in most parts) group $G$.
In this case we will write $ {}^G_G \cmdcalYD $ for $ {}^{k G}_{k G} \cmdcalYD $, and $ R \cmddotrtimes G $ for $ R \cmddotrtimes k G $.

\subsection{}

Let $G$ be an abelian group, and let $ \widehat{G} = \cmdHom ( G , k^{\times} ) $ denote the dual group.
Given arbitrary elements $ \chi \in \widehat{G} $, $ g \in G $,
we can construct a one-dimensional object $ k v \in {}^G_G \cmdcalYD $ by defining
\begin{equation*}
  h \cmdrhu v = \chi ( h ) v \quad ( h \in G ) , \quad \rho ( v ) = g \otimes v .
\end{equation*}
We will indicate these structures so as 
\begin{equation} \label{eq2.5}
  k v = ( k v ; \chi , g ) .
\end{equation}
Every one-dimensional object in $ {}^G_G \cmdcalYD $ is of this form.
An object $V$ in $ {}^G_G \cmdcalYD $ is said to be {\it of diagonal type} \cite{AS1},
if it is a (direct) sum of one-dimensional objects.
If $k$ is an algebraically closed field of characteristic zero,
and $G$ is finite (abelian),
then every object in $ {}^G_G \cmdcalYD $ is of diagonal type.

Suppose that $ V ( \in {}^G_G \cmdcalYD ) $ is of diagonal type, or explicitly
\begin{equation*}
  V = \bigoplus_{i \in \cmdcalI} ( k v_i ; \chi_i, g_i )
\end{equation*}
for some $ \chi_i \in \widehat{G} , g_i \in G $; the index set $ \cmdcalI $ may be infinite.
Since the associated braiding
\begin{equation*}
  c_{i , j} : k v_i \otimes k v_j \cmdarrow^{\simeq} k v_j \otimes k v_i
\end{equation*}
is the scalar multiplication by $ \chi_j ( g_i ) $,
the commutant $ V' $ of $V$ is given by
\begin{equation} \label{eq2.6}
  V' = \bigoplus_{i \in \cmdcalI'} k v_i ,
\end{equation} 
where
\begin{equation} \label{eq2.7}
  \cmdcalI' := \{ i \in \cmdcalI \mid \chi_i ( g_j ) \chi_j ( g_i ) = 1 \ \mbox{ for all } j \in \cmdcalI \}.
\end{equation}
We have
\begin{equation*}
  \cmdfrakB ( V ) = \cmdbarbigotimes_{i \in \cmdcalI'} \cmdfrakB ( k v_i ) \cmdbarotimes \cmdfrakB ( \bigoplus_{i \notin \cmdcalI'} k v_i ).
\end{equation*}

\begin{remark} \label{2.1}
  If $ i \in \cmdcalI' $, then $ \chi_i ( g_i )^2 = 1 $.
  It follows that if the characteristic $ \cmdch k $ of $k$ is zero,
  the order $ \left| G \right| $ of $G$ is odd and $ \cmdfrakB ( V ) $ is finite-dimensional,
  then $ V' = 0 $.
  For one sees that if $ ( k v ; \chi, g ) \in {}^G_G \cmdcalYD $ satisfies $ \chi ( g )^2 = 1 $,
  then
  \begin{equation*}
    \cmdfrakB ( k v ) =
    \begin{cases}
      k [ X ] & \quad \mbox{ if } \ \chi ( g ) = 1 \ \mbox{ and } \ \cmdch k = 0,\\
      k [ X ] / ( X^p ) & \quad \mbox{ if } \ \chi ( g ) = 1 \ \mbox{ and } \ \cmdch k = p > 0 ,\\
      k [ X ] / ( X^2 ) & \quad \mbox{ if } \ \chi ( g ) = -1 \ne 1.
    \end{cases}
  \end{equation*}
\end{remark}

\subsection{}
Let $A$ be a pointed Hopf algebra,
and let $ G = G ( A ) $ denote the (not necessarily abelian) group of all grouplikes in $A$.
We let $ \cmdgr A $ denote the graded Hopf algebra associated to the coradical filteration $ k G = A_0 \subset A_1 \subset \cdots $ in $A$.
We have uniquely a braided,
strictly graded Hopf algebra $R$ in $ {}^G_G \cmdcalYD $ whose bosonization equals $ \cmdgr A $,
or in notation
\begin{equation*}
  R \cmddotrtimes G = \cmdgr A.
\end{equation*}
Especially, $ R ( 0 ) = k $, and $ R ( 1 ) $ equals the space $ P ( R ) $ of all primitives in $R$.
If $R$ is generated by $ P ( R ) $,
or equivalently if $A$ is generated by skew primitives,
then $ R = \cmdfrakB ( P ( R ) ) $.
We define
\begin{equation} \label{eq2.8}
  \cmdYD ( A ) := P ( R ) \ ( = R ( 1 ) ),
\end{equation}
which we call the {\it infinitesimal Yetter-Drinfeld module} associated to $A$.
We see easily the following.

\begin{lemma} \label{2.2}
  $ \cmdYD ( B ) \subset \cmdYD ( A ) $ if $B$ is a Hopf subalgebra of $A$.
\end{lemma}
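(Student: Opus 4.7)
The plan is to lift the Hopf subalgebra inclusion $B \hookrightarrow A$ to an injection of the associated graded Hopf algebras, and then to read off $\cmdYD ( B ) \subset \cmdYD ( A ) $ from the degree-one pieces.

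First, I would observe that $B$ is itself pointed with $G ( B ) = G ( A ) \cap B$ and coradical $B_0 = k G ( B ) = B \cap A_0$: any subcoalgebra of a pointed coalgebra is pointed, and its simple subcoalgebras, being one-dimensional, are spanned by grouplikes of $A$ that lie in $B$. Next I would establish the key identity $B_n = B \cap A_n$ for every $n$. The inclusion $B_n \subset B \cap A_n$ is the standard fact that a coalgebra map sending coradical into coradical preserves the entire coradical filtration. For the reverse inclusion I would argue by induction using the wedge description $A_n = A_0 \wedge A_{n-1}$: if $b \in B \cap A_n$, then $\Delta_B ( b ) = \Delta_A ( b ) \in A_0 \otimes A + A \otimes A_{n-1}$; intersecting with $B \otimes B$ and invoking the inductive hypothesis $B \cap A_{n-1} = B_{n-1}$ places $\Delta_B ( b )$ in $B_0 \otimes B + B \otimes B_{n-1}$, whence $b \in B_n$.

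The identity $B_n = B \cap A_n$ forces the induced morphism $\cmdgr B \to \cmdgr A$ of graded Hopf algebras to be injective in every degree. Writing $\cmdgr B = S \cmddotrtimes G ( B )$ and $\cmdgr A = R \cmddotrtimes G ( A )$ via Radford's biproduct, one recovers $S$ (respectively $R$) as the right coinvariants in $\cmdgr B$ (respectively $\cmdgr A$) for the canonical coalgebra projection onto $k G ( B )$ (respectively $k G ( A )$). These projections are compatible with the inclusion $\cmdgr B \hookrightarrow \cmdgr A$, since each sends the degree-zero part identically onto the respective group algebra and higher degrees to zero; hence coinvariance for the $k G ( B )$-projection entails coinvariance for the $k G ( A )$-projection. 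Thus $S \hookrightarrow R$ as graded subspaces, and restricting to degree one gives $\cmdYD ( B ) = S ( 1 ) \subset R ( 1 ) = \cmdYD ( A )$.

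The main obstacle is the coradical filtration identity $B_n = B \cap A_n$. It is classical, but its induction relies crucially on the base case $B_0 = B \cap A_0$, which is exactly what the Hopf-subalgebra hypothesis supplies through the first step; without such a base case, the degree-one quotient $B_1 / B_0$ need not even embed into $A_1 / A_0$, and the rest of the strategy would collapse.
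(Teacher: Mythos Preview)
Your argument is correct. The paper itself gives no proof of this lemma, merely prefacing it with ``We see easily the following,'' so there is nothing to compare against; you have supplied precisely the standard details underlying that remark. The only step deserving a word of caution is the sentence ``intersecting with $B \otimes B$\ldots places $\Delta_B(b)$ in $B_0 \otimes B + B \otimes B_{n-1}$'': this uses the linear-algebra identity $(B \otimes B) \cap (A_0 \otimes A + A \otimes A_{n-1}) = (B \cap A_0) \otimes B + B \otimes (B \cap A_{n-1})$, which holds because both sides are the kernel of $B \otimes B \to (A/A_0) \otimes (A/A_{n-1})$ once one knows $B/B_0 \hookrightarrow A/A_0$ and $B/B_{n-1} \hookrightarrow A/A_{n-1}$. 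You clearly have this in mind, but it is worth making explicit, since naive ``intersection distributes over sum of tensors'' fails in general.
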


We will study Hopf ideals of a certain pointed Hopf algebra $ \cmdcalD $ which will be constructed in the following section.
Recall that a bi-ideal of a pointed Hopf algebra is necessarily a Hopf ideal.

\section{The generalized quantum double}

Let $F$, $G$ be (possibly infinite) abelian groups.
We will write $ f g $ for the element $ ( f , g ) $ in $ F \times G $.
Suppose that we are given a bimultiplicative map
\begin{equation*}
  \tau_0 : F \times G \rightarrow k^{\times} ( = k \setminus 0 ).
\end{equation*}
For $ f \in F $, $ g \in G $, define $ \widehat{g} \in \widehat{F} $, $ \widehat{f} \in \widehat{G} $ by
\begin{equation*}
  \widehat{g} (f) = \tau_0 (f , g) = \widehat{f} (g).
\end{equation*}
We will also write $ \widehat{g} \widehat{f} $ for the element $ ( \widehat{g} , \widehat{f} ) $ in $ \widehat{F} \times \widehat{G} ( = (F \times G)^{\wedge} ) $.
Let $ \cmdcalI ( \ne \phi ) $ be a (possibly infinite) index set.
Choose arbitrarily elements $ f_i \in F, g_i \in G $ for each $ i \in \cmdcalI $,
and define Yetter-Drinfeld modules,
\begin{equation*}
  V = \bigoplus_{ i \in \cmdcalI } ( k v_i ; \widehat{g}_i , f_i ) \ \mbox{ in } {}^F_F \cmdcalYD, \quad W = \bigoplus_{ i \in \cmdcalI } ( k w_i ; \widehat{f}^{-1}_i , g_i ) \ \mbox{ in } {}^G_G \cmdcalYD.\\
\end{equation*}
Suppose that we are given pre-Nichols algebras
\begin{equation*}
  R~ ( \in {}^F_F \cmdcalYD ) , \quad S~ ( \in {}^G_G \cmdcalYD )
\end{equation*}
of $V$, $W$, respectively.
Let
\begin{equation} \label{eq3.1}
  B = R \cmddotrtimes F, \quad H = S \cmddotrtimes G
\end{equation}
denote their bosonizations.
Thus, $B$, $H$ are pointed Hopf algebras with $ G ( B ) = F $, $ G ( H ) = G $, in which the coproducts $ \Delta ( v_i ) , \Delta ( w_i ) \ ( i \in \cmdcalI ) $ are given by
\begin{equation*}
  \Delta ( v_i ) = v_i \otimes 1 + f_i \otimes v_i , \quad \Delta ( w_i ) = w_i \otimes 1 + g_i \otimes w_i .
\end{equation*}

\begin{proposition} \label{3.1}
  $ \tau_0 $ uniquely extends to such a skew pairing $ \tau : B \otimes H \rightarrow k $ that satisfies
  \begin{equation} \label{eq3.2}
    \tau ( v_i , w_j ) = \delta_{i j} \quad ( i , j \in \cmdcalI ),
  \end{equation}
  \begin{equation} \label{eq3.3}
    \tau ( f , w ) = 0 = \tau ( v , g ) \quad ( f \in F , g \in G , v \in V , w \in W ).
  \end{equation}
\end{proposition}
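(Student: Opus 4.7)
The plan is to construct $\tau$ in two stages: first on the ``free'' bosonizations $T(V) \cmddotrtimes F$ and $T(W) \cmddotrtimes G$, then to show that it descends along the pre-Nichols quotients $T(V) \twoheadrightarrow R$ and $T(W) \twoheadrightarrow S$. Uniqueness is the easy half: since $B$ is generated as an algebra by $F \cup V$ and $H$ by $G \cup W$, the skew-pairing axioms $\tau(xy,h) = \tau(x,h_1)\tau(y,h_2)$ and $\tau(x, hh') = \tau(x_1,h')\tau(x_2,h)$, together with $\tau(1,-) = \varepsilon$ and $\tau(-,1) = \varepsilon$, reduce every value of $\tau$ to a combination of $\tau|_{F \otimes G} = \tau_0$, the values (\ref{eq3.2}) on $V \otimes W$, and the zero values (\ref{eq3.3}); hence $\tau$ is forced.

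For existence on the free level, I would extend $\tau_0$ to a skew pairing $\widetilde{\tau} : T(V) \cmddotrtimes F \otimes T(W) \cmddotrtimes G \to k$ by the recursion forced by those same axioms. Concretely, $\widetilde{\tau}$ will vanish on pairs of monomials whose $V$- and $W$-degrees differ, and in matched degree $n$ it will take the quantum-shuffle form
\begin{equation*}
  \widetilde{\tau}(v_{i_1} \cdots v_{i_n} f,\, g\, w_{j_1} \cdots w_{j_n}) = \tau_0(f,g) \sum_{\sigma \in S_n} c_\sigma(\mathbf{i}, \mathbf{j}) \prod_{k=1}^n \delta_{i_k,\, j_{\sigma(k)}},
\end{equation*}
where $c_\sigma$ is a product of braiding scalars assembled from $\tau_0$. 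The verification that $\widetilde{\tau}$ really satisfies the skew-pairing axioms proceeds by induction on degree and uses that the braidings of $V$ in ${}^F_F \cmdcalYD$ and of $W$ in ${}^G_G \cmdcalYD$ are themselves built from $\tau_0$ via $\widehat{g}_i(f_j) = \tau_0(f_j, g_i)$ and $\widehat{f}^{-1}_i(g_j) = \tau_0(f_i, g_j)^{-1}$, which is exactly what makes $V$ and $W$ ``dual'' in the correct sense.

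It remains to descend $\widetilde{\tau}$ to $B \otimes H$. Let $\cmdfraka_R \subset T(V)$ and $\cmdfraka_S \subset T(W)$ be the homogeneous braided Hopf ideals defining $R$ and $S$, so $\cmdfraka_R \cap V = 0 = \cmdfraka_S \cap W$. The left radical $\{x \in T(V) : \widetilde{\tau}(x \cdot kF,\, T(W) \cdot kG) = 0\}$ is a homogeneous Hopf ideal of $T(V)$ whose intersection with $V$ is zero (since $\widetilde{\tau}(v_i, w_j) = \delta_{ij}$), hence by the maximality characterization of the Nichols algebra it contains $\cmdfraka_R$; symmetrically on the right for $\cmdfraka_S$. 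Therefore $\widetilde{\tau}$ annihilates $\cmdfraka_R \cdot kF$ on the left and $kG \cdot \cmdfraka_S$ on the right, and descends to the required $\tau$. The main technical obstacle is the degree-inductive verification of the skew-pairing axioms on the free level; this is essentially the construction underlying the quantum shuffle algebra, and the bookkeeping with braiding coefficients is presumably handled in the cited \cite{M1}.
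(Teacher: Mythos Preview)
Your outline is essentially the argument behind the reference the paper cites (the paper's own proof is nothing more than an appeal to \cite[Theorem~5.3]{M1}), so strategically you are on the same track. There is, however, a genuine logical slip in your descent step.

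You show that the left radical $L \subset T(V)$ of $\widetilde{\tau}$ is a homogeneous braided Hopf ideal with $L \cap V = 0$, and then write ``hence by the maximality characterization of the Nichols algebra it contains $\cmdfraka_R$''. But maximality points the other way: any homogeneous braided Hopf ideal with trivial intersection with $V$ is \emph{contained in} the Nichols ideal $\cmdfraka_{\cmdfrakB(V)}$, so your premises give only $L \subset \cmdfraka_{\cmdfrakB(V)}$. Since $\cmdfraka_R$ is an \emph{arbitrary} pre-Nichols ideal (also contained in $\cmdfraka_{\cmdfrakB(V)}$), there is no a priori inclusion between $\cmdfraka_R$ and $L$ from this alone.

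What is needed is the reverse inclusion $\cmdfraka_{\cmdfrakB(V)} \subset L$, i.e.\ that the radical of the canonical pairing $T(V)\otimes T(W)\to k$ is exactly the Nichols ideal. This is true and is one of the standard characterizations of $\cmdfrakB(V)$ (it is precisely what lies behind Proposition~\ref{3.2}), but it is a separate fact, not a consequence of maximality. Once you have $L = \cmdfraka_{\cmdfrakB(V)}$, then $\cmdfraka_R \subset \cmdfraka_{\cmdfrakB(V)} = L$ does follow from maximality, and the descent to arbitrary pre-Nichols quotients $R$, $S$ goes through. So the fix is small, but as written the key inclusion is asserted with the wrong justification.
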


We continue to denote this skew pairing by $\tau$.
By definition of skew pairings,
$ \tau $ satisfies
\begin{eqnarray*}
  \tau ( bc , h ) = \tau ( b , h_1 ) \tau ( c , h_2 ),\\
  \tau ( b , hl ) = \tau ( b_1 , l ) \tau ( b_2 , h ),\\
  \tau ( b , 1 ) = \varepsilon ( b ) , \quad \tau ( 1 , h ) = \varepsilon ( h ),
\end{eqnarray*}
where $ b , c \in B $, $ h , l \in H $.

\begin{proof}[Proof of Proposition \ref{3.1}.]
  In the situation of \cite[Theorem 5.3]{M1},
  we can suppose $J$, $K$ to be our $ k F $, $ k G $.
  We can also suppose the $ \lambda $ in \cite{M1} to be such that $ \lambda ( w_j , v_i ) = \delta_{ i j } $.
  In this special case the first half of \cite[Theorem 5.3]{M1} is precisely the proposition above.
  See also \cite[Theorem 8.3]{RS1}, \cite[Lemma 3.1]{RS2}.
\end{proof}

\begin{proposition} \label{3.2}
  The restriction $ \tau \! \mid_{R \otimes S} : R \otimes S \rightarrow k $ is non-degenerate if and only if $ R , S $ are both Nichols.
\end{proposition}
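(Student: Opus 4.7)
The plan is to identify the left and right radicals of $\tau|_{R\otimes S}$ as homogeneous braided Hopf ideals meeting $V$ and $W$ trivially, and then use the characterization of $\cmdfrakB(V)$ (resp.\ $\cmdfrakB(W)$) as $T(V)/\cmdfraka$ (resp.\ $T(W)/\cmdfraka$) with $\cmdfraka$ the largest such ideal. As a preliminary step I would establish the grading compatibility $\tau(R_m,S_n)=0$ for $m\ne n$: by iterated skew-multiplicativity $\tau(b,h_1\cdots h_n)=\tau(b_{(1)},h_n)\cdots\tau(b_{(n)},h_1)$, any mismatched expansion of $\tau(v_{i_1}\cdots v_{i_m},w_{j_1}\cdots w_{j_n})$ must contain a factor $\tau(v_i,g)=0$ (by (\ref{eq3.3})) or $\tau(1,w_j)=\varepsilon(w_j)=0$.

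For the direction ``non-degenerate $\Rightarrow R,S$ both Nichols,'' I would argue the contrapositive. Suppose $R$ is not Nichols; then the canonical surjection $R\twoheadrightarrow\cmdfrakB(V)$ has a nonzero homogeneous braided Hopf kernel $\cmdfrakh$. A lowest-degree nonzero element $r\in\cmdfrakh$ is necessarily a braided primitive of $R$ (the coideal property of $\cmdfrakh$ together with minimality forces the reduced coproduct to vanish) of degree $n\ge 2$ (since $\cmdfrakh\cap V=0$). Taking $r$ to be $F$-homogeneous of degree $f_r\in F$, the iterated coproduct in $B$ reads $\Delta_B^{n-1}(r)=\sum_{k=1}^n f_r^{\otimes(k-1)}\otimes r\otimes 1^{\otimes(n-k)}$, so upon expanding $\tau(r,w_{j_1}\cdots w_{j_n})$ via iterated skew-multiplicativity every summand contains a factor $\tau(f_r,w_j)=0$ (by (\ref{eq3.3})), or $\tau(r,w_j)=0$ (by the grading step, using $n\ge 2$), or $\tau(1,w_j)=0$. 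Hence $r$ lies in the left radical and $\tau|_{R\otimes S}$ is degenerate; the case where $S$ is not Nichols is symmetric.

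For the converse, let $N:=\{r\in R\mid\tau(r,S)=0\}$. I would show that $N$ is a homogeneous braided Hopf ideal of $R$ satisfying $N\cap V=0$, so that maximality of the defining ideal of $R=\cmdfrakB(V)$ forces $N=0$ (the right radical in $S$ being handled symmetrically). Homogeneity follows from the grading step and $N\cap V=0$ from (\ref{eq3.2}). The key technical ingredients are the two braided compatibility formulas, valid for $F$-, respectively $G$-, homogeneous arguments,
\[ \tau(r_1 r_2,s)=\sum\tau_0(f_{r_1},g_{s^{(2)}})\,\tau(r_1,s^{(1)})\,\tau(r_2,s^{(2)}), \qquad \tau(r,s_1 s_2)=\sum\tau(r^{(1)},s_2)\,\tau(r^{(2)},s_1), \]
obtained by unfolding $\Delta_B$ and $\Delta_H$ in terms of the braided coproducts and $F$-, $G$-coactions and invoking the vanishings $\tau(R_+,kG)=0=\tau(kF,S_+)$ (easy consequences of (\ref{eq3.3})) to eliminate the mixed terms. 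The first formula immediately yields the ideal property of $N$; the second says that for $r\in N$ the tensor $\Delta_R(r)\in R\otimes R$ lies in the left kernel of the bilinear map $(R\otimes R)\otimes(S\otimes S)\to k$, $(r_1\otimes r_2,s_1\otimes s_2)\mapsto\tau(r_1,s_2)\tau(r_2,s_1)$. Since the linear map $\phi\colon R\to S^*$, $\phi(r)(s)=\tau(r,s)$, has kernel precisely $N$, the induced map $R\otimes R\to S^*\otimes S^*$ factors as $R\otimes R\twoheadrightarrow R/N\otimes R/N\hookrightarrow S^*\otimes S^*$, so that left kernel equals $N\otimes R+R\otimes N$, giving the coideal property.

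The main obstacle is the derivation of the two braided compatibility formulas: although $\tau$ lives on the ordinary Hopf algebras $B,H$, the Nichols-ness information is intrinsic to the braided Hopf algebras $R,S$, and one must verify that upon unpacking $\Delta_B$ and $\Delta_H$ the vanishing relations of (\ref{eq3.3}) conspire to eliminate the genuinely ``mixed'' contributions, leaving only pure braided terms with any residual coaction factors collapsing to scalars $\tau_0(\cdot,\cdot)$. Once these formulas are established the rest of the argument is a routine radical-versus-ideal manipulation.
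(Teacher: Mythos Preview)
Your proof is correct. The paper's own proof is a one-line citation to \cite[Proposition~2.6(1)]{M2}, using only that $\tau|_{V\otimes W}$ is non-degenerate (immediate from \eqref{eq3.2}); what you have written is essentially a self-contained reproduction of that cited result, whose core is precisely the identification of the radicals of the pairing with the defining ideals of the Nichols algebras. Your two braided compatibility formulas are those of \cite[Lemmas~2.2--2.3]{M2}, echoed later in this paper as \eqref{eq7.11}--\eqref{eq7.13}, so the ``main obstacle'' you flag is already handled in the literature. One small efficiency worth noting: in the backward direction you only need that $N$ is a homogeneous \emph{coideal} with $N\cap V=0$, since a minimal-degree nonzero element of such an $N$ is then a primitive of $R=\cmdfrakB(V)$ and hence lies in $V$; the ideal property and any Yetter--Drinfeld stability of $N$ are not actually required for the conclusion $N=0$.
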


\begin{proof}
  This follows by \cite[Proposition 2.6(1)]{M2}, since our $ \tau \! \mid _{V \otimes W} $ is non-degenerate.
\end{proof}

By \cite[Proposition 1.5]{DT}, the linear map $ \sigma : ( B \otimes H ) \otimes ( B \otimes H ) \rightarrow k $ defined by
\begin{equation*}
  \tau ( b \otimes h , c \otimes l ) = \varepsilon ( b ) \tau ( c , h ) \varepsilon ( l )
\end{equation*}
is a $2$-cocyle for the Hopf algebra $ B \otimes H $ of tensor product.
We let
\begin{equation} \label{eq3.4}
  \cmdcalD = B \cmdbicross_{\tau} H
\end{equation}
denote the cocyle deformation $ ( B \otimes H )^{ \sigma } $ of $ B \otimes H $ by $ \tau $; see \cite[Sect. 2]{DT}.

\begin{proposition} \label{3.3}
  This $ \cmdcalD $ is the Hopf algebra which is defined on $ H \otimes B $,
  uniquely so that it includes $ B = B \otimes k $, $ H = k \otimes H $ as Hopf subalgebras,
  and satisfies the relations
  \begin{eqnarray}
    g f &=& f g, \label{eq3.5} \\
    g v_i &=& \tau_0 ( f_i , g ) v_i g, \label{eq3.6} \\
    w_i f &=& \tau_0 ( f , g_i ) f w_i, \label{eq3.7} \\
    w_j v_i &=& \tau_0 ( f_i , g_j ) v_i w_j + \delta_{i j} ( f_i g_i - 1 ), \label{eq3.8}
  \end{eqnarray}
  where $ f \in F $, $ g \in G $, $ i \in \cmdcalI $.
\end{proposition}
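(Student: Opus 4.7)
The plan is to unwind the definition of the $2$-cocycle deformation $(B \otimes H)^{\sigma}$ used in (\ref{eq3.4}) and then specialize the resulting multiplication formula to each pair of generator types. By the general Doi--Takeuchi formula, the deformed product is
\begin{equation*}
  x \cdot y = \sum \sigma( x_1 , y_1 )\, x_2\, y_2\, \sigma^{-1}( x_3 , y_3 ),
\end{equation*}
which, under the specific $\sigma(b \otimes h, c \otimes l) = \varepsilon(b)\tau(c,h)\varepsilon(l)$ displayed in the text, collapses to
\begin{equation*}
  ( b \otimes h )( c \otimes l ) = \sum \tau( c_1 , h_1 )\, \tau^{-1}( c_3 , h_3 )\, b c_2 \otimes h_2 l ,
\end{equation*}
where $\tau^{-1}$ denotes the convolution inverse of $\tau$; the coproduct of $\cmdcalD$ is inherited unchanged from $B \otimes H$.

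Setting $c = 1$ or $h = 1$ in this formula and using $\tau(-, 1) = \varepsilon = \tau(1, -)$ shows that $B \otimes k$ and $k \otimes H$ are closed under the deformed product and inherit the original multiplications of $B$ and $H$; together with the unchanged coproduct this makes them Hopf subalgebras. The identity $b \otimes h = (b \otimes 1)(1 \otimes h)$ in $\cmdcalD$ then gives $\cmdcalD = B \cdot H$ as a product of subalgebras, so the whole structure is determined by the cross-product
\begin{equation*}
  (1 \otimes h)(c \otimes 1) = \sum \tau(c_1, h_1)\, \tau^{-1}(c_3, h_3)\, c_2 \otimes h_2,
\end{equation*}
for $c \in B$, $h \in H$. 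This simultaneously yields the uniqueness assertion: any Hopf algebra structure on $H \otimes B$ that contains $B$ and $H$ as Hopf subalgebras and satisfies (\ref{eq3.5})--(\ref{eq3.8}) is pinned down by reducing every monomial in the generators to the normal form $b \otimes h$.

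It then remains to specialize the cross-product to generators. For $h = g$, $c = f$ grouplike the sum collapses to $\tau_0(f, g)\tau_0(f, g)^{-1} fg = fg$, giving (\ref{eq3.5}). For $h = g$, $c = v_i$, the three-term coproduct of $v_i$ combined with the vanishing $\tau(v_i, g) = 0$ from (\ref{eq3.3}) leaves exactly one surviving term $\tau_0(f_i, g) v_i g$, giving (\ref{eq3.6}); an entirely parallel argument handles (\ref{eq3.7}). The main calculation is (\ref{eq3.8}): expanding the iterated coproducts
\begin{equation*}
  \Delta^{(2)}(v_i) = v_i \otimes 1 \otimes 1 + f_i \otimes v_i \otimes 1 + f_i \otimes f_i \otimes v_i
\end{equation*}
and analogously for $w_j$ yields nine terms, of which only three are nonzero: a unit contribution proportional to $\tau(v_i, w_j) = \delta_{i j}$, a middle contribution $\tau_0(f_i, g_j) v_i w_j$, and a grouplike contribution proportional to $\tau^{-1}(v_i, w_j)\, f_i g_j$. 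The value of $\tau^{-1}(v_i, w_j)$ is extracted from the convolution identity $(\tau * \tau^{-1})(v_i, w_j) = 0$, after first checking by the analogous one-step identities that $\tau^{-1}(v_i, g_j) = 0 = \tau^{-1}(f_i, w_j)$. The main obstacle is just the bookkeeping of this nine-term expansion together with the correct determination of $\tau^{-1}$ on the primitive pair $(v_i, w_j)$; once that is done, summing the three surviving contributions produces precisely (\ref{eq3.8}).
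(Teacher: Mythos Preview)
Your proof is correct. The paper's own argument is a one-line citation: it observes that in the specialized situation of Proposition~\ref{3.1}, the second half of \cite[Theorem 5.3]{M1} is precisely Proposition~\ref{3.3}. You instead supply a self-contained direct computation from the Doi--Takeuchi cocycle-deformation formula, deriving the cross-product $(1\otimes h)(c\otimes 1)=\sum\tau(c_1,h_1)\,\tau^{-1}(c_3,h_3)\,c_2\otimes h_2$ and then specializing to the generators $f,g,v_i,w_j$. This is presumably what \cite{M1} does behind the citation, so the two arguments agree in substance; yours has the advantage of being readable without consulting the external reference, while the paper's has the advantage of brevity and of tying the result into the general framework already developed in \cite{M1}.

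One caution on bookkeeping: the sign in the final relation (\ref{eq3.8}) is sensitive to which of the two standard conventions for the twisted product one adopts (i.e., whether $\sigma$ or $\sigma^{-1}$ appears on the left in $x\cdot y=\sigma^{\pm1}(x_1,y_1)\,x_2y_2\,\sigma^{\mp1}(x_3,y_3)$). With the convention you wrote, a straightforward run-through of the nine-term expansion yields the constant term $\delta_{ij}$ from $\tau(v_i,w_j)$ and the grouplike term $-\delta_{ij}f_ig_j$ from $\tau_0(f_i,g_j)\tau^{-1}(v_i,w_j)$, giving $\delta_{ij}(1-f_ig_i)$ rather than $\delta_{ij}(f_ig_i-1)$. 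You should check this against the precise convention used in \cite[Sect.~2]{DT}, which is what the paper invokes; the discrepancy is only a convention issue, not a gap in the method.
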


\begin{proof}
  In the specialized situation described in the proof of Proposition \ref{3.1},
  the second half of \cite[Theorem 5.3]{M1} is precisely the proposition to be proven.
\end{proof}

We call $ \cmdcalD $ the {\it generalized quantum double} associated to $ \tau $.

\begin{remark} \label{3.4}
  Suppose that $ \tau_0 $ is non-degenerate,
  and that $R$, $S$ are both Nichols, so that $ R = \cmdfrakB ( V ) $, $ S = \cmdfrakB ( W ) $.
  By \cite[Corollary 2.4, Proposition 2.6(1)]{M2},
  $ \tau $ is then non-degenerate.
  Suppose in addition that $F$ or $G$ is
  (then both are) finite,
  and that $ \cmdfrakB ( V ) $ or $ \cmdfrakB ( W ) $ is
  (then both are) finite-dimensional.
  Then, $ H = \cmdfrakB ( W ) \cmddotrtimes G $ is a finite-dimensional Hopf algebra,
  and $ \cmdcalD $ is (isomorphic to) the quantum double of $H$ as Drinfeld originally defined.
  We denote this by $ D ( H ) $.
\end{remark}

Let us return to the general situation before Remark \ref{3.4}.
As is seen from the proof of \cite[Theorem 5.3]{M1},
one can construct a pointed (graded) Hopf algebra,
say $ \cmdcalE $,
just as the Hopf algebra $ \cmdcalD $ as described by Proposition \ref{3.3},
but replacing (\ref{eq3.8}) by
\begin{equation*}
  w_j v_i = \tau_0 ( f_i , g_j ) v_i w_j \quad ( i , j \in \cmdcalI ).
\end{equation*}
We see from that proof, moreover, the following.

\begin{proposition} \label{3.5}
  \begin{enumerate}
    \item $V$, $W$ turn into objects in $ {}^{ F \times G }_{ F \times G } \cmdcalYD $ of diagonal type so that
    \begin{equation*}
      V = \bigoplus_{i \in \cmdcalI} ( k v_i ; \widehat{g}_i \widehat{f}_i , f_i ) , \quad W = \bigoplus_{i \in \cmdcalI} ( k w_i ; \widehat{g}^{-1}_i \widehat{f}^{-1}_i , g_i ).
    \end{equation*}
    The graded algebras $R$, $S$ turn into pre-Nichols algebras of $V$, $W$ in  $ {}^{ F \times G }_{ F \times G } \cmdcalYD $, respectively.
    \item The braided tensor product $ R \cmdbarotimes S $ in $ {}^{ F \times G }_{ F \times G } \cmdcalYD $ is a pre-Nichols algebra of $ V \oplus W $;
    it is Nichols if the original $R$, $S$ are both Nichols.
    The bosonization $ ( R \cmdbarotimes S ) \cmddotrtimes ( F \times G ) $ is precisely $ \cmdcalE $.
    \item $ \cmdcalD $ is a cocycle deformation of $ \cmdcalE $, too,
    whence $ \cmdcalD = \cmdcalE $ as coalgebras.
    \item $ \cmdcalD $ is a pointed Hopf algebra with $ G ( \cmdcalD ) = F \times G $,
    and the associated infinitesimal Yetter-Drinfeld module $ \cmdYD ( \cmdcalD ) $ includes $ V \oplus W $
    as a subobject.
    We have $ \cmdYD ( \cmdcalD ) = V \oplus W $,
    if $R$, $S$ are both Nichols.
  \end{enumerate}
\end{proposition}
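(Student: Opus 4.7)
The strategy is to promote $V \in {}^F_F\cmdcalYD$ and $W \in {}^G_G\cmdcalYD$ to a single Yetter-Drinfeld structure over $F \times G$ in which they become symmetric, identify $R \cmdbarotimes S$ as a pre-Nichols algebra of $V \oplus W$, recognize its bosonization as $\cmdcalE$, and invoke the construction in the proof of \cite[Theorem 5.3]{M1} to see $\cmdcalD$ as a cocycle deformation of $\cmdcalE$. For part (1), I would equip $v_i$ with the combined $F \times G$-structure by retaining its original $F$-structure and adding the $G$-action $g \cmdrhu v_i = \tau_0(f_i,g)\, v_i$ forced by (\ref{eq3.6}), with trivial $G$-coaction so that its $F \times G$-degree stays $f_i$, and symmetrically for $w_i$ via (\ref{eq3.7}). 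The Yetter-Drinfeld axiom is automatic in this diagonal situation and gives the stated characters $\widehat{g}_i \widehat{f}_i$ on $v_i$ and $\widehat{g}_i^{-1} \widehat{f}_i^{-1}$ on $w_i$. These enlarged structures extend uniquely to the generators of $R$ and $S$, and the pre-Nichols property is inherited since the defining homogeneous ideal of each is plainly stable under the enlarged group.

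For part (2), the crucial computation is the braiding in $ {}^{F\times G}_{F\times G}\cmdcalYD $: directly from (1), $c(v_i \otimes w_j) = \tau_0(f_i,g_j)^{-1}\, w_j \otimes v_i$ and $c(w_j \otimes v_i) = \tau_0(f_i,g_j)\, v_i \otimes w_j$, whose composition is the identity, so $V$ and $W$ are symmetric. The pre-Nichols analogue of (\ref{eq2.3}), proved by the same one-line argument via $wv = (w_{-1} \cmdrhu v)\, w_0$, then gives $R \cmdbarotimes S$ as a pre-Nichols algebra of $V \oplus W$, Nichols iff both $R$ and $S$ are. The bosonization $(R \cmdbarotimes S) \cmddotrtimes (F \times G)$ is matched with $\cmdcalE$ by inspection of generators: its braided cross-commutation is precisely the defining relation $w_j v_i = \tau_0(f_i, g_j)\, v_i w_j$ of $\cmdcalE$, while (\ref{eq3.5})--(\ref{eq3.7}) come out of the bosonization recipe.

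For parts (3) and (4), I appeal again to \cite[Theorem 5.3]{M1} to view the $2$-cocycle $\sigma$ as also deforming $\cmdcalE$; directly applying $x \cdot_\sigma y = \sigma(x_1, y_1)\, x_2 y_2\, \sigma^{-1}(x_3, y_3)$ to $x = w_j$ and $y = v_i$ with the bosonized triple coproducts, together with $\tau(v_i, w_j) = \delta_{ij}$, produces precisely the extra term $\delta_{ij}(f_i g_i - 1)$ of (\ref{eq3.8}) while every other relation is preserved, yielding $\cmdcalD = \cmdcalE^{\sigma}$ and hence $\cmdcalD = \cmdcalE$ as coalgebras. Part (4) is then formal: coalgebra invariants transfer under cocycle deformation, so $G(\cmdcalD) = F \times G$, the coradical filtration and its associated braided graded Hopf algebra match those of $\cmdcalE$, and $\cmdYD(\cmdcalD) = P(R \cmdbarotimes S)$, which contains $V \oplus W$ with equality exactly when both $R$ and $S$ are Nichols. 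The main obstacle I anticipate is the Sweedler bookkeeping in (3)---verifying that the cocycle deformation formula reproduces the specific correction $\delta_{ij}(f_i g_i - 1)$---everything else reducing to one-line diagonal-type calculations or direct appeals to (\ref{eq2.3}).
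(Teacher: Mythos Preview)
Your proposal is correct and follows essentially the same route as the paper, which simply refers the reader to the proof of \cite[Theorem~5.3]{M1}; you have just spelled out the diagonal computations that that reference leaves implicit. One small point worth tightening in part~(3): the $2$-cocycle $\sigma$ is defined on the tensor-product Hopf algebra $B\otimes H$, and the assertion that it is also a cocycle on $\cmdcalE$ (so that $\cmdcalD=\cmdcalE^{\sigma}$ literally) is not automatic from $\cmdcalE = B\otimes H$ as coalgebras---what \cite{M1} actually gives is that $\cmdcalE$ is itself a cocycle deformation of $B\otimes H$, whence $\cmdcalD$ is a cocycle deformation of $\cmdcalE$ by the composite cocycle, which is all that part~(3) claims.
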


It what follows we keep the notation
\begin{equation*}
  F , \ G , \ \tau_0 , \ V , \ W , \ R , \ S , \ \tau , \ \cmdcalD
\end{equation*}
as above.
We also keep $C$ denoting the subgroup of $ F \times G $ defined by
\begin{equation*}
  C = \{ f g \mid \tau_0 ( f , g_i ) \tau_0 ( f_i , g ) = 1 \ ( i \in \cmdcalI ) \}.
\end{equation*}
Notice from (\ref{eq3.6}), (\ref{eq3.7}) that $C$ consists of all grouplikes that are central in $ \cmdcalD $.

\section{Tensor product decomposition of $ \cmdcalD $}

Let
\begin{equation*}
  P = \left< f_i g_i \mid i \in \cmdcalI \right>
\end{equation*}
denote the subgroup of $ F \times G $ generated by all $ f_i g_i ( i \in \cmdcalI ) $.
Note that all $ c - 1 ( c \in C ) $,
being (central) skew primitives,
generate a Hopf ideal in $\cmdcalD$.
Let
\begin{equation*}
  \cmdcalD_c = \cmdcalD / ( c - 1 \mid c \in C )
\end{equation*}
denote the quotient Hopf algebra of $ \cmdcalD $ by that Hopf ideal.

\begin{proposition} \label{4.1}
  Consider the following three conditions.
  \begin{itemize}
    \item[(a)] The natural group map $ C \rightarrow F \times G / P $ is a split mono.
    \item[(b)] The inclusion $ k C \hookrightarrow \cmdcalD $ splits as a Hopf algebra map.
    \item[(c)] There is an algebra isomorphism $ \cmdcalD \simeq k C \otimes \cmdcalD_c $.
  \end{itemize}
  Then we have $ (a) \Rightarrow (b) \Rightarrow (c) $.
\end{proposition}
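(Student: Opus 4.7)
The plan is to prove the two implications separately, with the bulk of the work consisting of constructing, respectively verifying, explicit Hopf algebra/algebra maps from the data provided.

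\textbf{For $(a)\Rightarrow(b)$.} Assume that the natural injection $C\hookrightarrow F\times G/P$ admits a group-theoretic retraction $s:F\times G/P\to C$. Composing with the quotient $q:F\times G\to F\times G/P$ produces a group homomorphism $\pi:=s\circ q:F\times G\to C$ with $\pi|_C=\cmdid_C$ and, crucially, $\pi(f_ig_i)=1$ for every $i\in\cmdcalI$. I would then define a $k$-linear map $\tilde\pi:\cmdcalD\to kC$ on the generators of Proposition \ref{3.3} by
\begin{equation*}
  \tilde\pi(fg)=\pi(fg),\qquad \tilde\pi(v_i)=0,\qquad \tilde\pi(w_i)=0
\end{equation*}
and check, using the presentation given in Proposition \ref{3.3}, that this is a well-defined algebra map: relations (\ref{eq3.5})--(\ref{eq3.7}) become trivial under $v_i,w_i\mapsto 0$, while (\ref{eq3.8}) reduces for $i=j$ to $0=\pi(f_ig_i)-1$, which holds precisely because $f_ig_i\in P$. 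The homogeneous relations coming from the pre-Nichols algebras $R$ and $S$ lie in strictly positive degree in the $v_i$ or $w_i$, so they also vanish. That $\tilde\pi$ respects the coalgebra structure is immediate on generators ($\Delta(v_i)=v_i\otimes 1+f_i\otimes v_i$ maps to $0=0\otimes 1+\pi(f_i)\otimes 0$, and likewise for $w_i$; on grouplikes it follows from $\pi$ being a group map). Hence $\tilde\pi$ is a Hopf algebra map, and since $\tilde\pi|_C=\cmdid_C$ it is a retraction of $kC\hookrightarrow\cmdcalD$.

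\textbf{For $(b)\Rightarrow(c)$.} Given a Hopf algebra retraction $\psi:\cmdcalD\to kC$, I use the standard central cleft-extension construction: define
\begin{equation*}
  \phi:\cmdcalD\longrightarrow kC\otimes\cmdcalD_c,\qquad \phi(d)=\psi(d_1)\otimes\overline{d_2}.
\end{equation*}
A direct Sweedler computation shows $\phi$ is an algebra map (the two tensor factors multiply independently in $kC\otimes\cmdcalD_c$). For the inverse, I first define $\mathrm{pr}:\cmdcalD\to\cmdcalD$ by $\mathrm{pr}(d)=\psi\cmdcalS(d_1)d_2$ and verify, using that $C$ is central (so $kC$ is central in $\cmdcalD$) and that $\psi|_{kC}=\cmdid$, that $\mathrm{pr}$ vanishes on $(kC)^+\cmdcalD$; consequently it descends to a well-defined linear map $\overline{\mathrm{pr}}:\cmdcalD_c\to\cmdcalD$. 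I then set
\begin{equation*}
  \phi^{-1}:kC\otimes\cmdcalD_c\longrightarrow\cmdcalD,\qquad c\otimes\bar d\longmapsto c\cdot\overline{\mathrm{pr}}(\bar d).
\end{equation*}
Two short Sweedler-calculus verifications, using the antipode axiom $\psi(d_1)\psi\cmdcalS(d_2)=\varepsilon(d)1$ and the counit axiom, give $\phi\circ\phi^{-1}=\cmdid$ and $\phi^{-1}\circ\phi=\cmdid$; in particular $\phi$ is the desired algebra isomorphism.

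\textbf{Where the difficulty sits.} The step $(b)\Rightarrow(c)$ is formal and works for any central Hopf subalgebra with a Hopf retraction. The real content is in $(a)\Rightarrow(b)$, and there the single delicate point is precisely the relation (\ref{eq3.8}): the extra summand $\delta_{ij}(f_ig_i-1)$ is what forces the splitting $s$ to kill all the $f_ig_i$, which is exactly why the splitting must be taken of $C\hookrightarrow F\times G/P$ rather than of $C\hookrightarrow F\times G$. Once this is noticed, both implications reduce to routine bookkeeping.
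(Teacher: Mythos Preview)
Your proof is correct and follows the same approach as the paper: for $(a)\Rightarrow(b)$ you both extend a group retraction $F\times G\to C$ killing $P$ to a Hopf retraction $\cmdcalD\to kC$ by sending $v_i,w_i\mapsto 0$, and for $(b)\Rightarrow(c)$ the isomorphism $d\mapsto\psi(d_1)\otimes\bar d_2$ is exactly the one the paper writes down. The only difference is that the paper cites Radford's projection theorem \cite[Theorem 3]{R1} for the invertibility of this map, whereas you construct the inverse $c\otimes\bar d\mapsto c\cdot\psi\cmdcalS(d_1)d_2$ by hand; both are routine once $kC$ is central.
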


\begin{proof}
  $ (a) \Rightarrow (b) $.
  Assume $(a)$, or equivalently that $ C \hookrightarrow F \times G $ has a retraction $ \pi : F \times G \rightarrow C $ such that $ P \subset \cmdKer \pi $.
  As is seen from Proposition \ref{3.3}, $ \pi $ can uniquely extend to a Hopf algebra retraction $ \pi : \cmdcalD \rightarrow k C $ so that $ \pi ( v_i ) = 0 = \pi ( w_i ) $ for all $ i \in \cmdcalI $.

  $ (b) \Rightarrow (c) $.
  Let $ \pi : \cmdcalD \rightarrow k C $ be a retraction as assumed by $ ( b ) $.
  By Radford \cite[Theorem 3]{R1}, a (right $ \cmdcalD_c $-comodule) algebra isomorphism $ \cmdcalD \simeq k C \otimes \cmdcalD_c $ is given by $ a \mapsto \pi ( a_1 ) \otimes \bar{a}_2 $.
\end{proof}

We are going to prove in our context two results due to Leonid Krop \cite{Kr},
who worked in the situation of Remark \ref{3.4}; our formulation is generalized.

\begin{theorem}[Krop] \label{4.2}
  Assume that
  \begin{equation} \label{eq4.1}
    F \mbox{ or } G \mbox{ is finite, and }
  \end{equation}
  \begin{equation} \label{eq4.2}
    \tau_0 : F \times G \rightarrow k^{ \times } \mbox{ is non-degenerate.}
  \end{equation}
  (Note then $ \left| F \right| = \left| G \right| < \infty $.)
  \begin{enumerate}
    \item We have
    \begin{equation*}
      \left| C \right| \left| P \right| = \left| F \right| \left| G \right |,
    \end{equation*}
    and the condition (a) above is equivalent to each of the following:
    \begin{itemize}
      \item [(a$'$)] $ C \rightarrow F \times G / P $ is monic/epic/isomorphic;
      \item [(a$''$)] The product map $ C \times P \rightarrow F \times G $ is monic/epic/isomorphic.
    \end{itemize}
    \item Assume in addition that
    \begin{equation} \label{eq4.3}
      \mbox{ for each } i \in \cmdcalI , f_i \ne 1 \mbox{ or } g_i \ne 1 \ ( \mbox{or in short, } f_i g_i \ne 1 ).
    \end{equation}
    Then the conditions (a), (a$'$), (a$''$), (b), (c) above are equivalent to each other.
  \end{enumerate}
\end{theorem}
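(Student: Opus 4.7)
For Part (1), the plan is to use the non-degenerate pairing $\tau_0$ to identify $F \times G$ with its Pontryagin dual $(F \times G)^{\wedge}$ via $fg \mapsto \widehat{g}\widehat{f}$, and to observe that under this identification $C$ is precisely the annihilator $P^{\perp}$ of $P$. Indeed, the defining condition $\tau_0(f, g_i)\tau_0(f_i, g) = 1$ for all $i \in \cmdcalI$ asserts exactly that the character $\widehat{g}\widehat{f}$ vanishes on every generator $f_i g_i$ of $P$. The duality formula for finite abelian groups then yields $|C| = |P^{\perp}| = |F \times G|/|P|$, i.e., $|C||P| = |F||G|$. Armed with this index formula, both group maps $C \to F\times G/P$ and $C \times P \to F \times G$ have kernel $C \cap P$ and images of orders $|C|/|C\cap P|$ and $|C||P|/|C\cap P|$, while their codomains have orders $|C|$ and $|C||P|$; hence for each map, monic, epic and iso are mutually equivalent, and the equivalence (a$'$) $\Leftrightarrow$ (a$''$) is transparent. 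Finally (a) $\Leftrightarrow$ (a$'$) is immediate, since a group isomorphism is trivially a split mono, and conversely a split mono between finite groups of equal order must be an iso.

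For Part (2), by Proposition \ref{4.1} it suffices to prove (c) $\Rightarrow$ (a$''$). The plan is to count the algebra characters (one-dimensional $k$-representations) of $\cmdcalD$ and of $kC \otimes \cmdcalD_c$ and compare. Let $\chi : \cmdcalD \to k$ be any algebra character. Relation (\ref{eq3.6}) gives $\chi(v_i)(1 - \tau_0(f_i, g)) = 0$ for all $g \in G$, and non-degeneracy of $\tau_0$ forces $\chi(v_i) = 0$ whenever $f_i \neq 1$; symmetrically (\ref{eq3.7}) forces $\chi(w_i) = 0$ whenever $g_i \neq 1$. Hypothesis (\ref{eq4.3}) then ensures that for each $i$ at least one of $\chi(v_i)$, $\chi(w_i)$ vanishes, so the $i = j$ case of (\ref{eq3.8}) collapses to $\chi(f_i g_i) = 1$. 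Thus $\chi$ is trivial on $P$. Conversely, any $\chi \in \widehat{F \times G}$ trivial on $P$ extends to a character of $\cmdcalD$ by setting $\chi(v_i) = \chi(w_i) = 0$: the pre-Nichols algebras $R$ and $S$ are strictly positively graded so all their defining relations live in degree $\geq 2$ and evaluate to $0$, while the cross-relations (\ref{eq3.5})--(\ref{eq3.8}) all become $0 = 0$ since $\chi|_P = 1$. Hence $|\mathrm{Char}(\cmdcalD)| = |P^{\perp}| = |C|$.

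Characters of $\cmdcalD_c$ correspond to characters of $\cmdcalD$ that in addition kill $C$, i.e., are trivial on the whole product $CP$; by the double annihilator identity $(CP)^{\perp} = C^{\perp} \cap P^{\perp} = P \cap C$ (using $C = P^{\perp}$ and finiteness of $F \times G$), this count is $|C \cap P|$. Algebra characters of a tensor product of $k$-algebras factor as products, so $|\mathrm{Char}(kC \otimes \cmdcalD_c)| = |\widehat{C}|\cdot|C \cap P| = |C|\cdot|C \cap P|$. Assumption (c) forces these two counts to agree, yielding $|C| = |C|\cdot|C \cap P|$, whence $|C \cap P| = 1$, which is (a$''$). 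The main technical step is the existence half of the character count for $\cmdcalD$: one must verify that the assignment $\chi(v_i) = \chi(w_i) = 0$ is compatible with every defining relation, including those internal to $R$ and $S$. This rests crucially on the strict positive grading of the pre-Nichols algebras (so no constant relations intervene) and on hypothesis (\ref{eq4.3}) (so that the $f_i g_i - 1$ terms in (\ref{eq3.8}) are absorbed by $\chi|_P = 1$); without (\ref{eq4.3}) one could have $f_i = g_i = 1$ leaving $\chi(v_i), \chi(w_i)$ unconstrained, and the equality $|\mathrm{Char}(\cmdcalD)| = |C|$ would fail.
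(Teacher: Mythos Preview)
Your approach is essentially the paper's own: identify $C$ with $P^{\perp}$ via $\tau_0$ for Part~(1), and for Part~(2) count algebra characters of $\cmdcalD$ and of $kC\otimes\cmdcalD_c$ and compare. The bookkeeping at the end differs only cosmetically (you deduce $C\cap P=\{1\}$ directly, the paper deduces $CP=F\times G$), and Part~(1) makes these equivalent.

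There is, however, one genuine gap in your character count. From (\ref{eq3.6}) and (\ref{eq3.7}) you correctly extract $\chi(v_i)=0$ when $f_i\ne1$ and $\chi(w_i)=0$ when $g_i\ne1$, and under (\ref{eq4.3}) this gives that \emph{at least one} of $\chi(v_i),\chi(w_i)$ vanishes. But to conclude $|\mathrm{Char}(\cmdcalD)|=|P^{\perp}|$ you need the restriction map $\mathrm{Char}(\cmdcalD)\to P^{\perp}$ to be a \emph{bijection}, not merely a surjection with a section; that is, you need $\chi(v_i)=\chi(w_i)=0$ for \emph{every} $i$ and every character $\chi$. The cross-relations (\ref{eq3.6})--(\ref{eq3.8}) alone do not force this: if, say, $f_i\ne1$ but $g_i=1$, then (\ref{eq3.7}) is vacuous for $w_i$ and (\ref{eq3.8}) gives no constraint on $\chi(w_i)$ once $\chi(v_i)=0$. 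Without injectivity, your subsequent count $|\mathrm{Char}(\cmdcalD_c)|=|C\cap P|$ also breaks down, and the equation $|C|=|C|\cdot|C\cap P|$ cannot be reached.

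The fix is immediate: use the conjugation relations \emph{internal} to $B$ and $H$, which hold in $\cmdcalD$ because $B,H\subset\cmdcalD$ as Hopf subalgebras. Concretely, in $B$ one has $f v_i f^{-1}=\tau_0(f,g_i)\,v_i$ and in $H$ one has $g w_i g^{-1}=\tau_0(f_i,g)^{-1}\,w_i$. Together with (\ref{eq3.6}), (\ref{eq3.7}) these show that each of $v_i,w_i$ is an $F\times G$-weight vector of weight $\widehat{g}_i\widehat{f}_i$ (resp.\ its inverse), which is nontrivial by (\ref{eq4.2}) and (\ref{eq4.3}); hence $\chi(v_i)=\chi(w_i)=0$ for all $i$, and your counts go through. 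The paper's proof asserts exactly this vanishing (``$\varphi(v_i)=0=\varphi(w_i)$'') at the corresponding step.
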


\begin{proof}
  (1) By (\ref{eq4.1}), (\ref{eq4.2}),
  $ \tau_0 $ induces an isomorphism $ \displaystyle F \times G \cmdarrow^{\simeq} ( F \times G )^{\wedge} \ $.
  Compose this with the restriction map $ ( F \times G )^{\wedge} \rightarrow \widehat{P} $,
  which is epic since by (\ref{eq4.1}), (\ref{eq4.2}),
  $k$ contains a root of $1$ whose order equals $ \exp F ( = \exp G ) $,
  the exponent.
  The desired result follows since the kernel of the composite is $C$.

  (2) Assume (c).
  We wish to prove that the product map $ C \times P \rightarrow F \times G $ is epic.
  Given an algebra $A$, let $ n( A ) $ denote the number of all algebra maps $ A \rightarrow k $.
  By (c),
  \begin{equation} \label{eq4.4}
    n ( \cmdcalD ) = n ( k C ) n ( \cmdcalD_c ) = \left| C \right| n ( \cmdcalD_c ).
  \end{equation}
  Let $ \varphi : \cmdcalD \rightarrow k $ be an algebra map.
  By (\ref{eq3.6}), (\ref{eq3.7}), (\ref{eq3.8}), (\ref{eq4.2}) and (\ref{eq4.3}),
  we see
  \begin{equation*}
    \varphi ( v_i ) = 0 = \varphi ( w_i ) \quad ( i \in \cmdcalI ) , \quad \varphi ( P ) = \{ 1 \},
  \end{equation*}
  which implies $ n ( \cmdcalD ) = \left| F \times G / P \right| = \left| C \right| $.
  By (\ref{eq4.4}),
  $ \varphi $ must coincide with the counit if $ \varphi ( C ) = \{ 1 \} $.
  We have shown that a group map $ F \times G \rightarrow k^{ \times } $ is trivial if it vanishes on $ C P $.
  This implies $ C P = F \times G $,
  as desired.
\end{proof}

Suppose that $ \cmdcalI $ is finite, and $ \cmdbbA = ( a_{i j} )_{ i,j \in \cmdcalI } $ is a Cartan matrix of finite type, which is symmetrized by a diagonal matrix $ \cmdbbD = \cmddiag ( \cdots d_i \cdots )_{ i \in \cmdcalI } $,
where $ d_i \in \{ 1 ,2, 3 \} $, in the standard way.
Let $ l > 1 $ be an odd integer.
We assume that $l$ is not a multiple of $3$ if $\cmdbbA$ contains an irreducible component of type $ G_2 $.
Suppose that $k$ contains a primitive $ l^{\cmdth} $ root $q$ of $1$.
Suppose $ F = G $, and that $G$ is the finite abelian group which is generated by $ K_i \ ( i \in \cmdcalI ) $, and is defined by $ K^l_i = 1 \ ( i \in \cmdcalI ) $.
Choose $f_i$, $g_i$, $\tau_0$ so that
\begin{equation*}
  f_i = g_i = K_i , \quad \tau_0 ( K_i , K_j ) = q^{-d_ia_{i j}} \quad ( i , j \in \cmdcalI ).
\end{equation*}
Let $T$ be the subgroup of $ G \times G $ defined by
\begin{equation*}
  T = \{ ( g , g^{-1} ) \mid g \in G \}~( = \left< ( K_i , K^{-1}_i ) \mid i \in \cmdcalI \right> ).
\end{equation*}
Note $ T \subset C $.
Let
\begin{equation*}
  \cmdcalD_t = \cmdcalD / ( K_i \otimes 1 - 1 \otimes K_i \mid i \in \cmdcalI )
\end{equation*}
denote the quotient Hopf algebra of $ \cmdcalD $ corresponding to $T$.
If $ \cmdch k = 0 $ and $R$, $S$ are both Nichols, then $\cmdcalD_t$ coincides with that quotient Hopf algebra of the Frobenius-Lusztig kernel $ u_q $ (see \cite{L}, \cite{Mu})
which is obtained by replacing the relations $ K^{2l}_i = 1 \ ( i \in \cmdcalI ) $ for $ u_q $ with $ K^l_i = 1 \ ( i \in \cmdcalI )$.

\begin{corollary}[Krop] \label{4.3}
  In the situation above, the following (i)-(iv) are equivalent to each other.
  \begin{itemize}
    \item [(i)] $T = C$;
    \item [(ii)] $C \cap P = \{ 1 \}$;
    \item [(iii)] $\tau_0$ is non-degenerate;
    \item [(iv)] $l$ is prime to $ \cmddet \cmdbbA $ (by the known values of the determinants,
    this condition is equivalent to that $ \cmdbbA $ does not contain any irreducible component of type $ A_n $ of rank $ n $ such that $ n + 1 $ is not prime to $l$,
    and if $l$ is a multiple of $3$, $\cmdbbA$ does not contain an irreducible component of type $ E_6 $ either).
  \end{itemize}
  If these equivalent conditions are satisfied,
  then $ \cmdcalD_t = \cmdcalD_c $ and the following (v), (vi) hold true.
  \begin{itemize}
    \item [(v)] $ kC \hookrightarrow \cmdcalD $ splits as a Hopf algebra map.
    \item [(vi)] $\cmdcalD$ is isomorphic to $ kC \otimes \cmdcalD_c $, as a right (or left) $\cmdcalD_c$-comodule algebra.
  \end{itemize}
\end{corollary}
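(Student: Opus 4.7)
The plan is to parametrize $G = (\cmdbbZ/l\cmdbbZ)^{\cmdcalI}$ via $g = \prod_j K_j^{n_j} \leftrightarrow n = (n_j)$ and translate each of (i)--(iv) into a linear-algebraic statement about the matrix $\cmdbbD\cmdbbA$ modulo $l$. Bimultiplicativity gives $\tau_0(K_i, g) = q^{-\sum_j d_i a_{ij} n_j}$ and, using the symmetrization $d_i a_{ij} = d_j a_{ji}$, also $\tau_0(g, K_i) = q^{-\sum_j d_i a_{ij} n_j}$; thus the bilinear form on $(\cmdbbZ/l\cmdbbZ)^{\cmdcalI}$ representing $\tau_0$ has matrix $-\cmdbbD\cmdbbA$, which is symmetric.

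With these coordinates the three relevant subgroups become transparent. Let $N := \cmdKer(\cmdbbD\cmdbbA \bmod l) \subseteq G$. If $f, g \in G$ have exponent vectors $n, m$, then the defining condition $\tau_0(f, K_i)\tau_0(K_i, g) = 1$ (for all $i$) of $(f, g) \in C$ collapses, via the symmetrization, to $\cmdbbD\cmdbbA(n + m) \equiv 0 \pmod l$; equivalently, the group-theoretic product $f \cdot g \in G$ lies in $N$. Meanwhile $P$ consists of the diagonal pairs $(h, h)$ and $T$ of the antidiagonal pairs $(g, g^{-1})$. The equivalence (iii) $\Leftrightarrow$ (iv) is then immediate: since $l$ is odd (and prime to $3$ whenever $G_2$ appears), $\cmddet\cmdbbD = \prod_i d_i$ is a unit mod $l$, so $\cmdbbD\cmdbbA$ is invertible mod $l$ iff $l$ is prime to $\cmddet\cmdbbA$.

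I will close the chain as (iii) $\Rightarrow$ (i) $\Rightarrow$ (ii) $\Rightarrow$ (iv). If $N = 0$, any $(f, g) \in C$ satisfies $f \cdot g = 1$, so $g = f^{-1}$ and $C \subseteq T$; combined with the a priori inclusion $T \subseteq C$ this gives (i). If $C = T$, then $T \cap P$ consists of $(h, h)$ with $h^2 = 1$, and since $l$ is odd this forces $h = 1$, giving (ii). If $C \cap P = 1$, note that $(h, h) \in C$ iff $h^2 \in N$ iff $h \in N$ (squaring is a bijection of $G$ as $2$ is a unit mod $l$), so $P \cap C \simeq N$; its triviality then forces $N = 0$, which is (iv).

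Finally, assume the equivalent conditions hold. Then $T = C$ gives $\cmdcalD_t = \cmdcalD_c$ directly from the definitions of the two quotients. From non-degeneracy of $\tau_0$, Theorem \ref{4.2}(1) yields $|C||P| = |F||G|$; combined with $C \cap P = 1$ this forces the product map $C \times P \to F \times G$ to be bijective, which is condition (a$''$) and hence condition (a) of Proposition \ref{4.1}. The implications (a) $\Rightarrow$ (b) $\Rightarrow$ (c) of that proposition then deliver (v) and (vi) respectively. No serious obstacle is foreseen; the only technical step is the use of symmetrization to merge the two $\tau_0$-factors in the defining condition of $C$ into a single kernel condition $f \cdot g \in N$, after which everything becomes routine linear algebra modulo $l$. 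The parenthetical description of (iv) in terms of the types $A_n$ and $E_6$ is then read off from the standard table of Cartan determinants.
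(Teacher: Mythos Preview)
Your proof is correct and follows essentially the same route as the paper's: both reduce each of (i)--(iv) to the single linear-algebraic condition that $\cmdbbD\cmdbbA$ has trivial kernel modulo $l$ (your $N=0$), using that $l$ is prime to each $d_i$. The paper states this reduction in one line without spelling out the chain, whereas you organize it as (iii) $\Rightarrow$ (i) $\Rightarrow$ (ii) $\Rightarrow$ (iv) $\Leftrightarrow$ (iii); your derivation of (v), (vi) via Theorem~\ref{4.2}(1) and Proposition~\ref{4.1} likewise matches what the paper compresses into the phrase ``by the proof of Theorem~\ref{4.2}(2).''
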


\begin{proof}
  The equivalence follows since one sees that each of (i)-(iv) is equivalent to that for arbitrary integers $n_i$,
  \begin{equation*}
    \sum_i n_i d_i a_{ij} \equiv 0 \ \cmdmod \ l \quad ( \forall j \in \cmdcalI ) \
  \end{equation*}
  implies $ n_i \equiv 0 \ \cmdmod \ l \ ( \forall i \in \cmdcalI ) $.
  As for (iv), note from the assumption on $l$ that each $d_i$ is prime to $l$.
  By the proof of Theorem \ref{4.2}(2), (ii) and (iii) imply (v), (vi).
\end{proof}

\section{Skinny and thin Hopf ideals of $ \cmdcalD $}

Throughout this section we assume that $R$, $S$ are both Nichols,
so that $ R = \cmdfrakB ( V ) $, $ S = \cmdfrakB ( W ) $.

Given a subgroup $T$ of $C$, the ideal $ ( t - 1 \mid t \in T ) $ generated by all $ t - 1 \ ( t \in  T ) $ is a Hopf ideal.

\begin{definition} \label{5.1}
  A Hopf ideal $ \cmdfraka $ of $ \cmdcalD $ is said to be {\rm skinny},
  if $ \cmdfraka = ( t - 1 \mid t \in T ) $ for some subgroup $ T \subset C $.
  We say that $ \cmdfraka $ is {\rm thin}, if
  \begin{equation*}
    V \cap \cmdfraka = 0 = W \cap \cmdfraka.
  \end{equation*}
\end{definition}

Let $ T \subset C $ be a subgroup, and set
\begin{equation*}
  \Gamma = F \times G / T.
\end{equation*}
Let
\begin{equation*}
  \cmdcalI_T = \{ i \in \cmdcalI \mid f_i g_i \in T \},
\end{equation*}
and set
\begin{equation*}
  V_T = \bigoplus_{ i \in \cmdcalI_T } k v_i , \quad W_T = \bigoplus_{ i \in \cmdcalI_T } k w_i.
\end{equation*}
Note that if $ T = C $, then
\begin{equation*}
  \cmdcalI_C = \{ i \in \cmdcalI \mid \tau_0 ( f_i , g_j ) \tau_0 ( f_j , g_i ) = 1~( j \in \cmdcalI ) \},\\
\end{equation*}
\begin{equation*}
  V_C = V' , \quad W_C = W' ,
\end{equation*}
where $ V' $, $ W' $ denote the commutants; see (\ref{eq2.6}).
Therefore,
\begin{equation} \label{eq5.1}
  V_T \subset V' , \quad W_T \subset W'.
\end{equation}
We can naturally regard $ \widehat{g}_i \widehat{f}_i $ as an element in $ \widehat{\Gamma} $, and $ V $, $ W $ as diagonal type objects in $ {}^{ \Gamma }_{ \Gamma } \cmdcalYD $.
Choose arbitrarily
\begin{equation*}
  Z \subset V_T, \quad \zeta : Z \hookrightarrow W_T
\end{equation*}
so that $ Z \subset V_T $ is a subobject,
and $ \zeta : Z \hookrightarrow W_T $ is a mono,
both in $ {}^\Gamma_\Gamma \cmdcalYD $.
Let
\begin{equation} \label{eq5.2}
  \cmdfraka = \cmdfraka ( T, Z, \zeta )
\end{equation}
denote the ideal of $ \cmdcalD $ generated by all $ t - 1 $, $ z - \zeta ( z ) $,
where $ t \in T $, $ z \in Z $.

\begin{proposition} \label{5.2}
  This $ \cmdfraka $ is a thin Hopf ideal of $ \cmdcalD $.
  In particular,
  a skinny Hopf ideal of $ \cmdcalD $ is thin.
\end{proposition}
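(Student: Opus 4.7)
The plan is to split the proof in two: first that $\cmdfraka$ is a Hopf ideal, second that it is thin. The first part is routine; the main difficulty lies in the thinness assertion.

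Since $\cmdcalD$ is pointed by Proposition~\ref{3.5}(4), every bi-ideal is a Hopf ideal, so for the first part it suffices to verify $\varepsilon(\cmdfraka) = 0$ (clear on generators) and $\Delta(\cmdfraka) \subset \cmdfraka \otimes \cmdcalD + \cmdcalD \otimes \cmdfraka$. For $t - 1$ with $t \in T$ this is immediate. For $z - \zeta(z)$ with $z \in Z$, I would decompose $z$ into $\Gamma$-homogeneous components: on a $\gamma$-component with representative $\gamma_0 \in F \times G$, write $z = \sum_i \alpha_i v_i$ and $\zeta(z) = \sum_j \beta_j w_j$, so that $t_i := \gamma_0^{-1} f_i$ and $s_j := \gamma_0^{-1} g_j$ lie in $T$; a direct calculation gives
\[
\Delta(z - \zeta(z)) = (z - \zeta(z)) \otimes 1 + \gamma_0 \otimes (z - \zeta(z)) + \sum_i \alpha_i \gamma_0 (t_i - 1) \otimes v_i - \sum_j \beta_j \gamma_0 (s_j - 1) \otimes w_j ,
\]
whose last two sums lie in $\cmdfraka \otimes \cmdcalD$.

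For the thinness I would pass to the associated graded with respect to the total Nichols degree, under which $\cmdgr \cmdcalD = \cmdcalE := \cmdfrakB(V \oplus W) \cmddotrtimes (F \times G)$ by Proposition~\ref{3.5}(2), (3). The ideal $\cmdfraka$ is generated by elements of filtration degrees $0$ and $1$, hence $\cmdgr \cmdfraka \subset \cmdfraka'$, the ideal of $\cmdcalE$ with the same generators. Since $V$ has pure filtration degree $1$ with $V \cap k(F \times G) = 0$, it embeds into the degree-$1$ part of $\cmdgr \cmdcalD$, and so $V \cap \cmdfraka$ injects into $V \cap \cmdgr_1 \cmdfraka \subset V \cap \cmdfraka'$; it therefore suffices to prove $V \cap \cmdfraka' = 0$, and symmetrically $W \cap \cmdfraka' = 0$, inside $\cmdcalE$. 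For this I would construct the Hopf algebra surjection $\cmdcalE \twoheadrightarrow \cmdfrakB(N) \cmddotrtimes \Gamma$, where $N := (V \oplus W)/Y$ with $Y := \{(z, -\zeta(z)) : z \in Z\}$ a subobject of $V \oplus W$ in ${}^\Gamma_\Gamma \cmdcalYD$, sending $f \mapsto fT$, $g \mapsto gT$, and $v_i$, $w_j$ to their images in $N$. In $\cmdcalE$ the relations are $w_j v_i = \tau_0(f_i, g_j) v_i w_j$ for all $i, j$ (as $V$ and $W$ are braid-symmetric inside $\cmdfrakB(V \oplus W)$ by the setup of Proposition~\ref{3.5}(2)), which descend to the quotient, so the map is well defined and annihilates $\cmdfraka'$. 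The main point, and source of the subtlety, is that the monic hypothesis on $\zeta$ yields the injectivity of the natural inclusions $V, W \hookrightarrow N$: an element $(v, 0) \in Y$ forces $\zeta(v) = 0$, hence $v = 0$, and symmetrically for $W$. These inclusions lift to embeddings into the degree-$1$ component of $\cmdfrakB(N)$, giving $V \cap \cmdfraka' = 0 = W \cap \cmdfraka'$. The ``in particular'' statement is the special case $Z = 0$, $\zeta = 0$, where $Y = 0$ and $N = V \oplus W$.
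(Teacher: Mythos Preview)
Your argument for the Hopf-ideal part is fine and is a legitimate alternative to the paper's approach, which gets this by exhibiting $\cmdfraka$ as the kernel of a Hopf algebra map.

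The thinness argument, however, has a genuine gap at the sentence ``The ideal $\cmdfraka$ is generated by elements of filtration degrees $0$ and $1$, hence $\cmdgr \cmdfraka \subset \cmdfraka'$.'' The inclusion that comes for free is the \emph{opposite} one: $\cmdgr\cmdfraka$ is an ideal of $\cmdcalE$ containing the symbols of all generators of $\cmdfraka$, so $\cmdfraka' \subset \cmdgr\cmdfraka$. The direction you need is the Gr\"obner-basis-type statement that the symbols of a generating set already generate the associated graded ideal; this is false in general and is not implied by the generators having low degree. Concretely, an element $v \in V \cap \cmdfraka$ can a priori be written only as $\sum a_k g_k b_k$ with the individual summands of high filtration degree cancelling down to degree~$1$, and then nothing forces the symbol of $v$ to lie in $\cmdfraka'$. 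Your chain $V\cap\cmdfraka \hookrightarrow V\cap\cmdgr_1\cmdfraka \subset V\cap\cmdfraka'$ breaks at the second inclusion.

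The paper avoids this by never passing to $\cmdgr$: it constructs the target directly as a cocycle deformation $\cmdcalA$ of $\cmdfrakB(U)\cmddotrtimes\Gamma$ (your $U=N$) via Lemma~\ref{5.3}, and then builds a Hopf algebra surjection $\pi:\cmdcalD\to\cmdcalA$. The crucial point is that, after decomposing $V\oplus W$ into six pairwise-symmetric summands, $\pi$ is \emph{visibly} the tensor product of the group-algebra projection $k(F\times G)\to k\Gamma$, the split epi $\tilde\alpha:\cmdfrakB(Z)\cmdbarotimes\cmdfrakB(\zeta(Z))\to\cmdfrakB(\zeta(Z))$, and identities on the remaining four Nichols factors. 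This pins down $\ker\pi$ exactly as $\cmdfraka$, and thinness is immediate because $\cmdfrakB(V)$ and $\cmdfrakB(W)$ sit inside $\cmdcalA$ as tensor factors. In effect the paper proves $\cmdcalD/\cmdfraka\simeq\cmdcalA$, from which one could \emph{a posteriori} deduce $\cmdgr\cmdfraka=\cmdfraka'$; but you cannot take that equality as input.
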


\noindent {\it Proof.} The second statement follows from the first, by choosing $ Z , \zeta $ so as to be zero.

To prove the first statement,
set
\begin{equation*}
  V^c_T = \bigoplus_{ i \notin \cmdcalI_T } k v_i , \quad W^c_T = \bigoplus_{ i \notin \cmdcalI_T } k w_i .
\end{equation*}
We can choose compliments $X$, $Y$ of $Z$, $ \zeta ( Z ) $ in $ V_T $, $ W_T $, respectively, so that
\begin{equation*}
  V = V^c_T \oplus X \oplus Z , \quad W = \zeta ( Z ) \oplus Y \oplus W^c_T \quad \mbox{ in } {}^{ \Gamma }_{ \Gamma } \cmdcalYD.
\end{equation*}
By (\ref{eq5.1}),
the six direct summands above are pairwise symmetric.
One sees that
\begin{equation*}
  \alpha : Z \oplus \zeta ( Z ) \rightarrow \zeta ( Z ) , \quad \alpha ( z , \zeta ( z' ) ) = \zeta ( z + z' )
\end{equation*}
is a split epi in $ {}^{ \Gamma }_{ \Gamma } \cmdcalYD $,
whose kernel $ \cmdKer \alpha $ consists of $ z - \zeta ( z ) \ ( z \in Z ) $.
Therefore, $ \alpha $ induces a split epi
\begin{equation*}
  \tilde{\alpha} : \cmdfrakB ( Z ) \cmdbarotimes \cmdfrakB ( \zeta ( Z ) ) \rightarrow \cmdfrakB ( \zeta ( Z ) )
\end{equation*}
of braided graded Hopf algebras,
whose kernel $ \cmdKer \tilde{\alpha} $ is generated by $ \cmdKer \alpha $.

Set
\begin{equation} \label{eq5.3}
  U := \frac{ V \oplus W }{ \cmdKer \alpha }~( \simeq V^c_T \oplus X \oplus \zeta ( Z ) \oplus Y \oplus W^c_T ).
\end{equation}
Then,
\begin{equation*}
  \cmdfrakB ( U ) = \cmdfrakB ( V^c_T ) \cmdbarotimes \cmdfrakB ( X ) \cmdbarotimes \cmdfrakB ( \zeta ( Z ) ) \cmdbarotimes \cmdfrakB ( Y ) \cmdbarotimes \cmdfrakB ( W^c_T ).
\end{equation*}
Let $ \bar{f_i} $, $ \bar{g_i} $ denote the natural image of $ f_i $, $ g_i $ in $ \Gamma $.
To continue the proof we need the next lemma, which follows by \cite[Theorem 4.10]{M1}.

\begin{lemma} \label{5.3}
  The bosonization $ \cmdfrakB ( U ) \cmddotrtimes \Gamma $ is deformed by a $2$-cocycle into a Hopf algebra so that for all $i , j \in \cmdcalI \setminus \cmdcalI_T $,
  the original relations
  \begin{equation*}
    w_j v_i = \tau ( f_i , g_j ) v_i w_j
  \end{equation*}
  are deformed to
  \begin{equation*}
    w_j v_i = \tau_0 ( f_i , g_j ) v_i w_j + \delta_{i j} ( \bar{f_i} \bar{g_i} - 1 ).
  \end{equation*}
\end{lemma}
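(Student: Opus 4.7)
The strategy is to reproduce, in the $\Gamma$-quotient setting, the same cocycle-twist argument that turned $\cmdcalE$ into $\cmdcalD$ in Proposition \ref{3.5}(3). The key input is \cite[Theorem 4.10]{M1}, which is the general statement of that deformation machinery, cited already for the construction of $\cmdcalE$ versus $\cmdcalD$.

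First I would reorganize $\cmdfrakB(U) \cmddotrtimes \Gamma$ into the ``undeformed'' form to which that theorem applies. Since $T \subset C$, a direct computation using the definition of $C$ shows that the $F \times G$-Yetter-Drinfeld structures on $V$ and $W$ from Proposition \ref{3.5}(1) descend along $F \times G \twoheadrightarrow \Gamma$, making each summand of $U$ a YD module over $\Gamma$. By (\ref{eq5.1}) and the definition of $\cmdcalI_T$, the five summands $V^c_T$, $X$, $\zeta(Z)$, $Y$, $W^c_T$ of $U$ are pairwise symmetric in $ {}^\Gamma_\Gamma \cmdcalYD $, so iterated application of (\ref{eq2.3}) gives
\begin{equation*}
  \cmdfrakB(U) \;=\; \cmdfrakB(V^c_T) \cmdbarotimes \cmdfrakB(X \oplus \zeta(Z) \oplus Y) \cmdbarotimes \cmdfrakB(W^c_T).
\end{equation*}
This identifies $\cmdfrakB(U) \cmddotrtimes \Gamma$ as the $\Gamma$-analogue of $\cmdcalE$ built from the pre-Nichols pair $\cmdfrakB(V^c_T)$, $\cmdfrakB(W^c_T)$, with the braided factor $\cmdfrakB(X \oplus \zeta(Z) \oplus Y)$ attached as a symmetric spectator.

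Next I would feed in the skew-pairing data. The bimultiplicative map $\tau_0$ descends through $T$ (again because $T \subset C$) and, together with the prescription $\tau'(v_i, w_j) = \delta_{ij}$ on generators with $i, j \notin \cmdcalI_T$ (and zero on all other generator pairs), yields a skew pairing between the two $\Gamma$-bosonizations of $\cmdfrakB(V^c_T)$ and $\cmdfrakB(W^c_T)$; this is the $\Gamma$-version of Proposition \ref{3.1}. By \cite[Theorem 4.10]{M1}, the associated $2$-cocycle deforms $\cmdfrakB(U) \cmddotrtimes \Gamma$ into the Hopf algebra in which, exactly as in (\ref{eq3.8}) with $F \times G$ replaced by $\Gamma$, the cross-commutators become
\begin{equation*}
  w_j v_i \;=\; \tau_0(f_i, g_j)\, v_i w_j + \delta_{ij}\,(\bar{f_i}\bar{g_i} - 1) \qquad (i, j \notin \cmdcalI_T),
\end{equation*}
while all relations involving the symmetric block $X \oplus \zeta(Z) \oplus Y$ remain unchanged. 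This is precisely the deformed Hopf algebra asserted by the lemma.

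The one delicate step is the descent from $F \times G$ to $\Gamma$: the YD data on $V, W$, the characters $\widehat{g}_i\widehat{f}_i$ and $\widehat{g}_i^{-1}\widehat{f}_i^{-1}$, and the bimultiplicative map feeding the cocycle must all be trivial against $T$. All three reductions rest on the single observation that, for $t = fg \in T \subset C$, one has $(\widehat{g}_i\widehat{f}_i)(fg) = \tau_0(f, g_i)\,\tau_0(f_i, g) = 1$ for every $i \in \cmdcalI$; this is the defining condition of $C$. Once that descent is in hand, \cite[Theorem 4.10]{M1} applies verbatim and produces both the $2$-cocycle and the required deformed multiplication table.
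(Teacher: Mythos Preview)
Your proposal is correct and takes the same approach as the paper: the paper's entire proof of Lemma \ref{5.3} is the single clause ``which follows by \cite[Theorem 4.10]{M1},'' and you have correctly identified that citation as the key input and fleshed out why it applies. One minor caution on your elaboration: the phrase ``$\tau_0$ descends through $T$'' should not be read as saying the bimultiplicative map $\tau_0 : F \times G \to k^\times$ itself factors through $\Gamma$ (it need not), and the $\Gamma$-level Yetter--Drinfeld structures on $V^c_T$, $W^c_T$ do not literally match the format of Proposition \ref{3.1} (the character of $v_i$ over $\Gamma$ is the descent of $\widehat{g}_i\widehat{f}_i$, not $\widehat{\bar g}_i$ for some bimultiplicative pairing on $\Gamma \times \Gamma$); what \cite[Theorem 4.10]{M1} actually consumes is the braided pairing between the Nichols-algebra factors $\cmdfrakB(V^c_T)$ and $\cmdfrakB(W^c_T)$, which is exactly the data your argument supplies.
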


We denote this deformed Hopf algebra by
\begin{equation} \label{eq5.4}
  \cmdcalA = \cmdcalA ( T , Z , \zeta ).
\end{equation}
Note that $ \cmdcalA = \cmdfrakB ( U ) \cmddotrtimes \Gamma $ as coalgebras,
and the first two components of the thus coradically graded coalgebra $ \cmdcalA $ are given by
\begin{equation} \label{eq5.5}
  \cmdcalA ( 0 ) = k \Gamma, \quad \cmdcalA ( 1 ) = U \otimes k \Gamma .
\end{equation}

\begin{proof}[Proof of Proposition \ref{5.2} (continued)]
  Since $ \bar{f_i} \bar{g_i} - 1 = 0 $ for $ i \in \cmdcalI_T $, one sees from Proposition \ref{3.3} that the natural projections $ F \times G \rightarrow \Gamma $,
  $ V \oplus W \rightarrow U $ induce a Hopf algebra epi $ \pi : \cmdcalD \rightarrow \cmdcalA $.
  Since $ \pi $ is the tensor product of $ \tilde{\alpha} $,
  the projection $ k ( F \times G ) \rightarrow k \Gamma $ and the identity maps of four Nichols algebras,
  we see that $ \cmdKer \pi $ is generated by $ t - 1 \ ( t \in T ) $ and $ \cmdKer \tilde{\alpha} $.
  This implies $ \cmdKer \pi = \cmdfraka $,
  whence $ \cmdcalD / \cmdfraka \simeq \cmdcalA $.
  (Therefore, $ \cmdcalA $ does not depend, up to isomorphism, on choice of $ X $, $ Y $.)
  Since $ \cmdcalD / \cmdfraka \ ( \simeq \cmdcalA ) $ includes $ V $, $ W $ (even $ \cmdfrakB ( V ) $, $ \cmdfrakB ( W ) $),
  $ \cmdfraka $ is a thin Hopf ideal.
\end{proof}

\begin{theorem} \label{5.4}
  Assume that $ R = \cmdfrakB ( V ) $, $ S = \cmdfrakB ( W ) $, and in addition,
  \begin{equation} \label{eq5.6}
    \mbox{ for each } i \in \cmdcalI, \widehat{f}_i \ne 1 \mbox{ or } \widehat{g}_i \ne 1~ ( \mbox{or in short, } \widehat{g}_i \widehat{f}_i \ne 1 ).
  \end{equation}
  Then a thin Hopf ideal of $ \cmdcalD $ is necessarily of the form $ \cmdfraka ( T, Z , \zeta ) $ for some $ T , Z , \zeta $.
\end{theorem}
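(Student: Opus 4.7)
The plan is to read off the triple $(T, Z, \zeta)$ directly from the thin Hopf ideal $\cmdfraka$ by inspecting its intersection with the coradical and with the skew primitives of $\cmdcalD$, and then to match $\cmdfraka$ with $\cmdfraka(T, Z, \zeta)$ using the explicit quotient $\cmdcalA(T, Z, \zeta)$ from Proposition~\ref{5.2}. First I would put $T := \{\gamma \in F \times G : \gamma - 1 \in \cmdfraka\}$, visibly a subgroup. For $\gamma \in T$, the commutation $\gamma v_i \gamma^{-1} = \widehat{g}_i\widehat{f}_i(\gamma) v_i$, which comes from (\ref{eq3.6}) together with the $F$-action on $v_i$, combined with $\gamma \equiv 1 \pmod{\cmdfraka}$, gives $(\widehat{g}_i\widehat{f}_i(\gamma) - 1) v_i \in \cmdfraka$; thinness forces $\widehat{g}_i\widehat{f}_i(\gamma) = 1$ for every $i$, and the symmetric argument with the $w_i$'s then gives $\gamma \in C$. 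So $T \subseteq C$; set $\Gamma := (F \times G)/T$ and $A := \cmdcalD/\cmdfraka$, a pointed Hopf algebra with coradical $A_0 = k\Gamma$.

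Next I would use condition (\ref{eq5.6}) to show that the induced maps $\iota_V : V \to A/A_0$ and $\iota_W : W \to A/A_0$ are both injective. If $v = \sum a_i v_i \in V$ has $\bar v \in A_0$, decomposing $\bar v$ into $\Gamma$-homogeneous skew-primitive components reduces the problem to the case $v - c(f - 1) \in \cmdfraka$ with $f$ a single representative of the relevant $\Gamma$-class. Conjugating this relation by an arbitrary $\delta \in F \times G$ and subtracting yields $\sum a_i(\widehat{g}_i\widehat{f}_i(\delta) - 1) v_i \in V \cap \cmdfraka = 0$, and (\ref{eq5.6}) furnishes, for each $i$ with $a_i \neq 0$, a $\delta$ with $\widehat{g}_i\widehat{f}_i(\delta) \neq 1$, forcing every $a_i = 0$ and then $c = 0$. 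Now define $Z := \iota_V^{-1}(\iota_W(W)) \subseteq V$ and $\zeta : Z \to W$ by the equality $\iota_V(z) = \iota_W(\zeta(z))$; since $\iota_V$ and $\iota_W$ are morphisms in ${}^\Gamma_\Gamma \cmdcalYD$, so is $\zeta$.

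The subtle step is the containment $Z \subseteq V_T$, equivalently $\zeta(Z) \subseteq W_T$. For a homogeneous identification $\bar v_i = \bar w_j$ in $A/A_0$, lift to $v_i - w_j - c(\gamma - 1) \in \cmdfraka$ with $\bar\gamma = \bar f_i = \bar g_j$. The character compatibility of $\zeta$, combined with $(f_i, g_j^{-1}) \in C$, forces $\tau_0(f_i, g_i)^2 = 1$ and renders the pair $kw_i, kw_j$ braided-symmetric, so that the $q$-commutator $w_i w_j - \tau_0(f_j, g_i)^{-1} w_j w_i$ vanishes in $\cmdfrakB(W)$. Multiplying the lifted relation on the left by $w_i$ and applying (\ref{eq3.8}) and (\ref{eq3.7}) then reduces everything to an identity in $A$ of the form
\begin{equation*}
  0 = c\bigl(1 - \tau_0(f_i, g_i)\bigr)\bar w_i + \bar f_i \bar g_i - 1.
\end{equation*}
When $\bar f_i \neq 1$ the two summands lie in distinct $\Gamma$-skew-primitive types and so vanish independently; in particular $\bar f_i \bar g_i = 1$, i.e., $i \in \cmdcalI_T$. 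The degenerate case $\bar f_i = 1$ is handled in parallel: both summands then have type $\bar g_i$, and the linear independence of $\bar w_i$ and $\bar g_i - 1$ modulo $A_0$ (secured by $\iota_W$) forces $\bar g_i = 1$ and hence $i \in \cmdcalI_T$ again.

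Finally, $\cmdfraka(T, Z, \zeta) \subseteq \cmdfraka$ by construction, inducing a Hopf surjection $\pi : \cmdcalA(T, Z, \zeta) \twoheadrightarrow A$. By the coalgebra identification $\cmdcalA(T, Z, \zeta) \cong \cmdfrakB(U) \cmddotrtimes \Gamma$ from Proposition~\ref{5.2}, $\pi$ is an isomorphism on $k\Gamma$ and, by the construction of $Z$ and $\zeta$ together with the previous paragraph, injective on the first graded piece $U$. Passing to the associated-graded coalgebras, $\cmdgr(\pi)$ induces a graded Hopf quotient of $\cmdfrakB(U)$ whose kernel is a graded Hopf ideal of $\cmdfrakB(U)$ meeting $U$ trivially; by the defining maximality property of Nichols algebras this kernel is zero, so $\pi$ is an isomorphism and $\cmdfraka = \cmdfraka(T, Z, \zeta)$. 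I expect the main obstacle to be paragraph three—carefully expanding the commutator relation (\ref{eq3.8}) against the coradical filtration of $A$ and the character-compatibility identities in order to extract the grouplike constraint $\bar f_i \bar g_i = 1$.
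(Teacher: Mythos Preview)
Your overall architecture matches the paper's: extract $T$, show $T\subset C$, build $Z$ and $\zeta$, prove $Z\subset V_T$, then compare $\cmdcalA(T,Z,\zeta)$ with $A$. Two points, however, are genuine gaps rather than matters of taste.

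\textbf{Defining $Z$ and $\zeta$ modulo $A_0$.} You set $Z=\iota_V^{-1}(\iota_W(W))$ using the images in $A/A_0$, and then assert ``$\cmdfraka(T,Z,\zeta)\subset\cmdfraka$ by construction''. But that containment needs $z-\zeta(z)\in\cmdfraka$, i.e.\ $\pi(z)=\pi(\zeta(z))$ in $A$, whereas your definition only gives equality in $A/A_0$. Your own computation in paragraph~3 shows the obstruction: when $\tau_0(f_i,g_i)=1$ the constant $c$ in your lift $v_i-w_j-c(\gamma-1)\in\cmdfraka$ is left entirely unconstrained, so $v_i-w_j\notin\cmdfraka$ in general. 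The paper avoids this by first proving $U\cap k\Gamma=0$ in $A$ (a one-line argument: $U$ is $\Gamma$-stable under conjugation and, by (\ref{eq5.6}), has no nonzero $\Gamma$-invariant, while $k\Gamma$ consists entirely of invariants), and then defines $Z$ as the pullback of $V\cap W$ \emph{inside} $A$. Your separate injectivity of $\iota_V$ and $\iota_W$ does not yield $U\cap k\Gamma=0$, because $\cmdfraka\cap(V\oplus W)$ can be nonzero even though $\cmdfraka\cap V=0=\cmdfraka\cap W$.

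\textbf{The reduction to single basis vectors.} Your paragraph~3 treats only the case $\bar v_i=\bar w_j$ for individual basis elements. But a simple subobject of $Z$ in ${}^\Gamma_\Gamma\cmdcalYD$ is one-dimensional yet may be spanned by a genuine linear combination $z=\sum_ic_iv_i$: all $v_i$ with $c_i\ne0$ share the same $(\widehat g_i\widehat f_i,\bar f_i)$, which can happen for distinct $i$. Likewise $\zeta(z)$ is a combination of several $w_l$. Your multiplication-by-$w_i$ computation does not extend cleanly to this situation (in particular the ``$q$-commutator of $w_i,w_j$ vanishes'' step becomes a statement about $w_j\,\zeta(z)$, a degree-$2$ element of $\cmdfrakB(W)$ with many terms). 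The paper's argument here is both shorter and fully general: for such a simple $z$ and any $j$ with $c_j\ne0$, the equal-type hypothesis gives $\tau_0(f_i,g_j)=\tau_0(f_j,g_j)$ for all relevant $i$, so in $\cmdcalD$
\[
w_jz-\tau_0(f_j,g_j)\,zw_j \;=\; c_j(f_jg_j-1).
\]
Applying $\pi$ yields an identity in $A$ whose two sides lie in the image of $\cmdfrakB(W)\cmddotrtimes\Gamma$. Because the embedding $(k\oplus U)\otimes k\Gamma\hookrightarrow A$ forces $\cmdfrakB(W)\cmddotrtimes\Gamma\to A$ to be injective (via \cite[5.3.1]{Mo}), the identity already holds in the graded algebra $\cmdfrakB(W)\cmddotrtimes\Gamma$, where a degree-$2$ element equals a degree-$0$ element only if both vanish; hence $f_jg_j\in T$.

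Your final paragraph (Nichols maximality forces $\cmdgr(\pi)$ to be injective) is a legitimate alternative to the paper's use of (\ref{eq5.5}) and (\ref{eq5.8}), and would work once the two gaps above are closed.
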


\begin{proof}
  Let $ \cmdfraka \subset \cmdcalD $ be a Hopf ideal.
  Set $ A = \cmdcalD / \cmdfraka $, and let $ \pi : \cmdcalD \rightarrow A $ denote the quotient map.
  Let
  \begin{equation*}
    T = \{ t \in F \times G \mid \pi ( t ) = 1 \},
  \end{equation*}
  and set $ \Gamma = F \times G / T $.
  Then $ \Gamma = G ( A ) $.
  Assume that $ \cmdfraka $ is thin.
  Then $V$, $W$ are embedded into $A$ via $\pi$.
  From the relations
  \begin{equation*}
    ( f g ) v_i = \tau_0 ( f_i , g ) \tau_0 ( f , g_i ) v_i ( f g ) \quad ( f \in F , g \in G , i \in \cmdcalI )
  \end{equation*}
  in $ \cmdcalD $, we see $ T \subset C $.
  Therefore, $V$, $W$ can be naturally regarded as objects in $ {}^{ \Gamma }_{ \Gamma } \cmdcalYD $, as before.
  Set the sum
  \begin{equation} \label{eq5.7}
    U := V + W \mbox{ in } A.
  \end{equation}
  This is stable under $ \Gamma $-conjugation,
  and does not contain any non-zero $ \Gamma $-invariant by(\ref{eq5.6}).
  Therefore, $ U \cap k \Gamma = 0 $.
  This implies that $U$ is naturally embedded into $ \cmdgr A ( 1 ) $,
  or more specifically into $ \cmdYD ( A ) $, which is the neutral component of the right $ k \Gamma ( = \cmdgr A ( 0 ) ) $-comodule $ \cmdgr A ( 1 ) $.
  Note that by this embedding $ U \hookrightarrow YD ( A ) $, $U$ is a subobject in $ {}^{ \Gamma }_{ \Gamma } \cmdcalYD $,
  and is indeed a sum of the two objects $V$, $W$ in $ {}^{ \Gamma }_{ \Gamma } \cmdcalYD $.
  For each $ \gamma \in \Gamma $, $ U \gamma $ is embedded into the $ \gamma $-component in $ \cmdgr A ( 1 ) $.
  Hence we have $ \bigoplus_{ \gamma \in \Gamma } U \gamma \hookrightarrow \cmdgr A ( 1 ) $,
  and so
  \begin{equation} \label{eq5.8}
    ( k \oplus U ) \otimes k \Gamma \hookrightarrow A
  \end{equation}
  via the product map.

  Let $Z$ be the pullback $ ( \pi \! \mid_V )^{-1} ( V \cap W ) $ of $ V \cap W ( \subset U ) $ along the embedding $ \pi \! \mid_V $.
  There exists a unique mono $ \zeta : Z \rightarrow W $ in $ {}^\Gamma_\Gamma \cmdcalYD $ such that $ \pi \! \mid_Z = \pi \! \mid_W \circ \ \zeta $.
  We wish to prove
  \begin{equation} \label{eq5.9}
    Z \subset V_T , \quad \zeta ( Z ) \subset W_T.
  \end{equation}
  To prove the first inclusion,
  let $ z = \sum_i c_i v_i \in Z $ with $ c_i \in k $.
  Choose arbitrarily $ j \in \cmdcalI $ such that $ c_j \ne 0 $.
  We may suppose that $z$ spans a simple object,
  so that if $ c_i \ne 0 $,
  then $ \pi ( f_i ) = \pi ( f_j ) $, $ \widehat{g}_i \widehat{f}_i = \widehat{g}_j \widehat{f}_j $.
  Then we have
  \begin{equation*}
    w_j z - \tau_0 ( f_j , g_j ) z w_j = c_j ( f_j g_j -1 )
  \end{equation*}
  in $ \cmdcalD $.
  By applying $ \pi $, we have
  \begin{equation} \label{eq5.10}
    w_j \zeta ( z ) - \tau_0 ( f_j , g_j ) \zeta ( z ) w_j = c_j ( \pi ( f_j g_j ) - 1 ).
  \end{equation}
  But, since (\ref{eq5.8}) implies that the natural map $ \cmdfrakB ( W ) \cmddotrtimes \Gamma  \rightarrow A $ is monic (see \cite[5.3.1 Theorem]{Mo}),
  both sides of (\ref{eq5.10}) must be zero,
  whence $ f_j g_j \in T $; this proves the first inclusion of (\ref{eq5.9}).
  The second follows similarly,
  by using the fact, which follows from (\ref{eq5.8}), that $ \cmdfrakB ( V ) \cmddotrtimes \Gamma \rightarrow A $ is monic.

  From $T$, $Z$, $ \zeta $ above, construct the Hopf algebra $ \cmdcalA = \cmdcalA ( T , Z , \zeta ) $; see (\ref{eq5.4}).
  Note that the $U$ given by (\ref{eq5.3}) and the $U$ given by (\ref{eq5.7}) are now identified canonically.
  We see that $ \pi $ induces a Hopf algebra epi $ \cmdcalA \rightarrow A $ which is identical on $ \Gamma $ and $U$.
  It is indeed an isomorphism by (\ref{eq5.5}), (\ref{eq5.8}).
  This proves the theorem.
\end{proof}

\begin{remark} \label{5.5}
  As is easily seen, we must assume $ R = \cmdfrakB ( V ) $, $ S = \cmdfrakB ( W ) $, to have the conclusion of Theorem \ref{5.4}.
  We must assume (\ref{eq5.6}), too, as will be seen below.
  In fact, we have an example for which there exists an $ i \in \cmdcalI $ such that
  \begin{equation} \label{eq5.11}
    \tau_0 ( f_i , g_j ) = 1 = \tau_0 ( f_j , g_i ) \quad ( j \in \cmdcalI ),
  \end{equation}
  \begin{equation} \label{eq5.12}
    f_i \notin T ,
  \end{equation}
  where $ T = \left< f_i g_i \right> $, the subgroup of $ F \times G $ generated by $ f_i g_i $.
  By (\ref{eq5.11}), $ T \subset C $, but $ \widehat{f}_i = 1 = \widehat{g}_i $, breaking (\ref{eq5.6}).
  Note that $ v_i $ and $ \Gamma := F \times G / T $ generate a central Hopf subalgebra in $ \cmdcalD / ( f_i g_i - 1 ) $.
  Then one sees that
  \begin{equation*}
    \cmdfraka = ( v_i - ( f_i - 1 ) , f_i g_i - 1 )
  \end{equation*}
  is a thin Hopf ideal by (\ref{eq5.12}),
  but cannnot be of the form (\ref{eq5.2}) since $ v_i \in k \Gamma $ in $ \cmdcalD / \cmdfraka $.
\end{remark}

\section{All Hopf ideals of $ \cmdcalD $ with connectedness assumption}

Distinct $i$, $j$ in $ \cmdcalI $ are said to be {\it adjacent} to each other if $ k v_i $ and $ k v_j $ (or $ kw_i $ and $ k w_j $) are not symmetric,
or namely if
\begin{equation} \label{eq6.1}
  \tau_0 ( f_i , g_j ) \tau_0 ( f_j , g_i ) \ne 1 .
\end{equation} 

\begin{theorem} \label{6.1}
  Assume that $R$, $S$ are both Nichols, and assume one of the following.
  \begin{itemize}
    \item [(i)] $ \cmdcalI $ consists of only one element, say $1$,
    so that $ \cmdcalI = \{ 1 \} $, and $ \tau ( f_1 , g_1 )^2 \ne 1 $.
    \item [(ii)] $ \left| \cmdcalI \right| > 1 $, and $ \cmdcalI $ is connected in the sense that if $ i \ne j $ in $ \cmdcalI $,
    there exists a finite sequence $ i = i_0 , i_1 , \cdots , i_r = j $ in $ \cmdcalI $ such that for each $ 0 \leq s < r $,
    $ i_s $ and $ i_{s + 1} $ are distinct and adjacent.
  \end{itemize}
  Then every Hopf ideal $ \cmdfraka $ of $ \cmdcalD $
  \begin{itemize}
    \item [(a)] is skinny (see Definition \ref{5.1}), or
    \item [(b)] contains all $ v_i $, $ w_i $, $ f_i g_i - 1 \ ( i \in \cmdcalI ) $.
  \end{itemize}
\end{theorem}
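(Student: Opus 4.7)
The plan is to use Theorem \ref{5.4} to reduce to a dichotomy: either $\cmdfraka$ is thin (and then forced to be skinny, giving (a)), or $\cmdfraka$ is not thin (and propagation arguments using adjacency yield (b)). First I check that hypothesis (\ref{eq5.6}) of Theorem \ref{5.4} holds. In case (i), $(\widehat{g}_1 \widehat{f}_1)(f_1 g_1) = \tau_0(f_1, g_1)^2 \ne 1$. In case (ii), for each $i$ pick $j$ adjacent to $i$ (which exists since $\cmdcalI$ is connected and $|\cmdcalI| > 1$); then $(\widehat{g}_i \widehat{f}_i)(f_j g_j) = \tau_0(f_i, g_j)\tau_0(f_j, g_i) \ne 1$.

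Suppose $\cmdfraka$ is thin. By Theorem \ref{5.4}, $\cmdfraka = \cmdfraka(T, Z, \zeta)$ for some $T \subset C$, $Z \subset V_T$, and mono $\zeta: Z \hookrightarrow W_T$. The crucial observation is that $\cmdcalI_T = \emptyset$ under either hypothesis. Indeed, $i \in \cmdcalI_T$ means $f_i g_i \in T \subset C$, and the definition of $C$ then forces $\tau_0(f_i, g_j) \tau_0(f_j, g_i) = 1$ for every $j \in \cmdcalI$; that is, $i$ is non-adjacent to every element of $\cmdcalI$. In case (ii), this isolates $i$ and contradicts connectedness; in case (i), specializing $j = i = 1$ yields $\tau_0(f_1, g_1)^2 = 1$, contradicting the hypothesis. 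Hence $V_T = W_T = 0$, which forces $Z = 0$ and $\zeta = 0$, so $\cmdfraka = (t - 1 \mid t \in T)$ is skinny, placing us in case (a).

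Suppose $\cmdfraka$ is not thin; by symmetry assume $V \cap \cmdfraka \ne 0$. Since $\cmdfraka \cap V$ is a subobject in ${}^{F \times G}_{F \times G}\cmdcalYD$ and $V$ is diagonal, pick a nonzero $v = \sum_{i \in I_0} c_i v_i$ lying in a single isotypic component; in particular all $f_i$ for $i \in I_0$ coincide with some $f_{i_0}$, so $\tau_0(f_i, g_k) = \tau_0(f_{i_0}, g_k)$ is constant on $I_0$. For any $k \in I_0$ with $c_k \ne 0$, the relation (\ref{eq3.8}) gives $w_k v - \tau_0(f_{i_0}, g_k) v w_k = c_k(f_k g_k - 1)$, so $f_k g_k - 1 \in \cmdfraka$. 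Now for any $j \in \cmdcalI$ adjacent to $k$, the commutator $(f_k g_k) v_j - v_j (f_k g_k) = (\tau_0(f_k, g_j)\tau_0(f_j, g_k) - 1)\, v_j(f_k g_k)$ lies in $\cmdfraka$, and reducing $f_k g_k$ to $1$ modulo $\cmdfraka$ yields $v_j \in \cmdfraka$; then $w_j v_j - \tau_0(f_j, g_j) v_j w_j = f_j g_j - 1$ gives $f_j g_j - 1 \in \cmdfraka$. Iterating along a connected path in $\cmdcalI$, every $v_i$ and every $f_i g_i - 1$ lands in $\cmdfraka$. A parallel commutator argument $(f_m g_m) w_l = (\tau_0(f_l, g_m)\tau_0(f_m, g_l))^{-1} w_l (f_m g_m)$, applied with $m$ adjacent to $l$---distinct in case (ii), or $m = l$ using the self-adjacency $\tau_0(f_1, g_1)^2 \ne 1$ in case (i)---produces every $w_l \in \cmdfraka$, placing $\cmdfraka$ in case (b).

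The main technical point is the lemma-like observation that $\cmdcalI_T = \emptyset$ whenever either (i) or (ii) holds; this collapses Theorem \ref{5.4}'s three-parameter family $\cmdfraka(T, Z, \zeta)$ onto its skinny subfamily, and handles the thin case in one stroke. Everything else is routine commutator arithmetic, whose only subtlety is selecting the starting element $v$ inside an isotypic component of $V$ so that the Kronecker term of (\ref{eq3.8}) survives as a clean scalar multiple of $f_k g_k - 1$ and can seed the propagation.
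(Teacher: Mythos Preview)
Your argument is correct and follows essentially the same route as the paper: split into the thin and non-thin cases, apply Theorem \ref{5.4} in the thin case together with the observation that the connectedness hypothesis forces $V_T=W_T=0$ (the paper phrases this as $V'=W'=0$), and in the non-thin case extract some $f_kg_k-1\in\cmdfraka$ and propagate along adjacencies.

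One small imprecision: you assert that $\cmdfraka\cap V$ is a subobject in ${}^{F\times G}_{F\times G}\cmdcalYD$, but the comodule stability is not obvious (distinct $f_i$ may collapse in $\cmdcalD/\cmdfraka$). Fortunately you only need stability under the $F\times G$-action by conjugation, which is immediate since $\cmdfraka$ is a two-sided ideal; choosing $v$ in a single action-eigenspace already makes $\widehat{f}_i$ constant on $I_0$, hence $\tau_0(f_i,g_k)$ constant, which is all your computation $w_kv-\tau_0(f_{i_0},g_k)\,vw_k=c_k(f_kg_k-1)$ requires. The paper avoids this preliminary reduction altogether: for arbitrary $v=\sum_jc_jv_j\in\cmdfraka$ it computes $g_i^{-1}w_iv-vg_i^{-1}w_i$, which equals $c_i$ times a unit multiple of $f_ig_i-1$ regardless of any homogeneity of $v$, isolating each coefficient in one stroke.
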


This result together with its proof directly generalizes those of Chin and Musson \cite[Theorem C]{CM}, and of M\"uller \cite[Theorem 4.2]{Mu}; see also \cite[Lemma A.2]{A}.

\begin{proof}
  Let $ \cmdfraka \subset \cmdcalD $ be a Hopf ideal.
  We will work first in Case (ii),
  and then in Case (i).

  Case (ii).
  First, suppose that for a specific $ i \in \cmdcalI $, $ \cmdfraka $ contains $ v_i $, $ w_i $, or $ f_i g_i - 1 $.
  In any case, $ f_i g_i - 1 \in \cmdfraka $ by (\ref{eq3.8}).
  Choose $ j ( \ne i ) $ which is adjacent to $i$.
  Then the relation
  \begin{equation} \label{eq6.2}
    ( f_i g_i ) v_j = \tau_0 ( f_i , g_j ) \tau_0 ( f_j, g_i ) v_j ( f_i g_i )
  \end{equation}
  in $ \cmdcalD $ implies $ v_j \in \cmdfraka $.
  Similarly, $ w_j \in \cmdfraka $, whence $ f_j g_j - 1 \in \cmdfraka $.
  This argument implies (b) above.

  Next, suppose that $ \cmdfraka $ does not contain any $ f_i g_i - 1 $.
  Suppose that $ v = \sum_{ j } c_j v_j ( \in V ) $ is in $ \cmdfraka $, where $ c_j \in k $.
  Then for each $i$, $ \cmdfraka $ contains $ g^{-1}_i w_i v - v g^{-1}_i w_i $,
  which equals $ c_i \tau ( f_i , g_i ) g^{-1}_i ( f_i g_i -1 ) $.
  This implies $ c_i = 0 $ for all $i$, whence $ V \cap \cmdfraka = 0 $.
  Similarly, $ W \cap \cmdfraka = 0 $.
  By (ii), (\ref{eq5.6}) is now satisfied,
  and $ V' = 0 = W' $.
  Theorem \ref{5.2} then implies (a).

  Case (i).
  Suppose $ f_1 g_1 - 1 \in \cmdfraka $.
  Then the relation (\ref{eq6.2}) with $ i = j = 1 $,
  together with an analogous one, proves $ v_1 \in \cmdfraka $, $ w_1 \in \cmdfraka $,
  whence we have (b).
  Suppose $ f_1 g_1 - 1 \notin \cmdfraka $.
  Then (\ref{eq3.8}) with $ i = j = 1 $ implies $ v_1 \notin \cmdfraka , w_1 \notin \cmdfraka $.
  Since (\ref{eq5.6}) is satisfied and $ V' = 0 = W' $,
  Theorem \ref{5.2} again implies (a).
\end{proof}

\begin{example} \label{6.2}
  Suppose $ k = \cmdbbQ ( q ) $ with $ q $ transcendental over $ \cmdbbQ $.
  Let $ U_q $ denote the quantized enveloping algebra, as defined by Kang \cite{K},
  associated to a symmetrizable Borcherds-Cartan matrix $ \cmdbbA $ and a sequence $ \cmdbbm $ of positive integers.
  We will use the same notation as used in \cite[Sect.4]{M2}.
  In particular, $ U^0 $ denotes the group algebra spanned by the grouplikes $ q^h ( h \in P^{\vee} ) $,
  and we have the two objects
  \begin{equation*}
    V = \bigoplus_{ j , l } k f'_{ j l } , \quad W = \bigoplus_{ j , l } k e_{ j l } \quad \mbox{ in } {}^{ U^0 }_{ U^0 } \cmdcalYD,
  \end{equation*}
  where $ ( j , l ) $ runs through the index set
  \begin{equation*}
    \cmdcalI = \{ ( j , l ) \mid j \in I , \ 1 \leq l \leq m_j \}.
  \end{equation*}
  The Nichols algebras of $V$, $W$ are made into $ U^{ \leq 0 } $, $ U^{ \geq 0 } $ by bosonization.
  We have a non-degenerate skew pairing $ \tau : U^{ \leq 0 } \otimes U^{ \geq 0 } \rightarrow k $,
  due to Kang and Tanisaki \cite{KT}, as given on Page 2214, lines -4 to -1 of \cite{M2}.
  These fit in with our situation of Section 3,
  supposing $ k F = k G = U^0 $, $ H = U^{ \leq 0 } $ , $ B = U^{ \geq 0 } $;
  note that the $R$ and the $S$ are then both Nichols.
  The associated, generalized quantum double $ U^{ \leq 0 } \cmdbicross_{\tau} U^{ \geq 0 } $,
  divided by the Hopf ideal generated by $ q^h \otimes 1 - 1 \otimes q^h ( h\in P^{\vee} ) $, turns into $ U_q $.
  Since $ U^0 $ coacts on the basis elements $ f'_{ j l } $, $ e_{ i j } $ so that $ f'_{ j l } \mapsto K_j \otimes f'_{ j l } , \ e_{ j l } \mapsto K_j \otimes e_{ j l } $,
  the condition (\ref{eq6.1}) now reads
  \begin{equation*}
    q^{ - 2 s_i a_{ i j } } = \tau ( K_i , K_j ) \tau ( K_j , K_i ) \ne 1,
  \end{equation*}
  which is equivalent to $ a_{ i j } \ne 0 $.
  Note that the grouplike $ q^h \otimes 1 $ (or $ 1 \otimes q^h $) in $ U^{ \leq 0 } \cmdbicross_{\tau} U^{ \geq 0 } $ is central if and only if
  \begin{equation*}
    q^{ - \alpha_j ( h ) } = \tau ( q^h , K_j ) = 1 \quad  ( or~\alpha_j ( h ) = 0 )
  \end{equation*}
   for all $ j \in I $.
  In view of this, we set
  \begin{equation} \label{eq6.3}
    N = \{ h \in P^{\vee} \mid \alpha_j ( h ) = 0 \ ( j \in I ) \}.
  \end{equation}
  Recall that the number $ \left| I \right|$ of $I$, finite or countably infinite, equals the size of the matrix $ \cmdbbA $.
  Assume
  \begin{itemize}
    \item [(i)] $ \left| I \right| = 1 $ and $ \cmdbbA \ne 0 $, or
    \item [(ii)] $ \left| I \right| > 1 $ and $ \cmdbbA $ is irreducible (or the Dynkin diagram of $ \cmdbbA $ is connected).
  \end{itemize}
  Then we see from Theorem \ref{6.1} that every Hopf ideal of $ U_q $
  \begin{itemize}
    \item [(a)] is generated by all $ q^h - 1 $,
    where $h$ runs through a certain additive subgroup of N (see (\ref{eq6.3})), or
    \item [(b)] contains all $ e_{ j l }, f_{ j l }, K^2_j - 1 $,
    where $ j \in I $, $ 1 \leq l \leq m_j $.
  \end{itemize}
\end{example}

\section{Minimal quasitriangular pointed Hopf algebras in characteristic~ zero}

We assume that $k$ is an algebraically closed field of characteristic $ \cmdch k $ is zero, unless otherwise stated.

Suppose that we are in the situation of Remark \ref{3.4} so that $ \cmdcalD = D ( H ) $,
Drinfeld's quantum double of $ H = \cmdfrakB ( W ) \cmddotrtimes G $.

\begin{remark} \label{7.1}
  In particular, we suppose $ S = \cmdfrakB ( W ) $,
  and that it is finite-dimensional.
  By the assumption $ \cmdch k = 0 $, we see from Remark \ref{2.1} the following.
  \begin{equation} \label{eq7.1}
    \tau_0 ( f_i , g_i ) \ne 1 \quad ( i \in \cmdcalI ).
  \end{equation}
  \begin{equation} \label{eq7.2}
    \mbox{If } \left| G \right| \mbox{ is odd, } \ \tau_0 ( f_i , g_i ) \ne -1 \ ( i \in \cmdcalI ), \mbox{ whence } W' = 0.
  \end{equation}
\end{remark}

Note that (\ref{eq7.1}) implies (\ref{eq5.6}).
Recall that $ \cmdcalD = D ( H ) $ has a canonical $R$-matrix with which $ \cmdcalD $ is a minimal quasitriangular (in the sense of Radford \cite{R2}) pointed Hopf algebras.
Choose arbitrarily a thin Hopf ideal $ \cmdfraka $ of $ \cmdcalD $,
which is necessarily of the form $ \cmdfraka ( T , Z , \zeta ) $ (see (\ref{5.2})) by Theorem \ref{5.4}.
Set $ \cmdcalA = \cmdcalD / \cmdfraka $.
Note that
\begin{equation*}
  F \cap C = \{ 1 \} = G \cap C,
\end{equation*}
since $ \tau_0 : F \times G \rightarrow k^{\times} $ is supposed to be non-degenerate.
It follows by the proof of Theorem \ref{5.4} that $ \cmdcalA $ is of the form $ \cmdcalA ( T, Z , \zeta ) $ (see (\ref{eq5.4})),
and it includes $ B = \cmdfrakB ( V ) \cmddotrtimes F $, $ H = \cmdfrakB ( W ) \cmddotrtimes G $.
By \cite[Theorem 1]{R2}, we conclude that $ \cmdcalA $ is a minimal quasitriangular pointed Hopf algebra with respect to the $R$-matrix inherited from $ \cmdcalD = D ( H ) $.
Note that $ \cmdcalA $ is generated by skew primitives.

\begin{theorem} \label{7.2}
  Conversely, every minimal quasitriangular pointed Hopf algebra that is generated by skew primitives is of the form $D(H)/\cmdfraka ( T , Z , \zeta )$, where $ H = \cmdfrakB ( W ) \cmddotrtimes G $.
\end{theorem}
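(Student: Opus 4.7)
The strategy is to invoke Radford's structure theorem for minimal quasitriangular Hopf algebras \cite{R2} to present $A$ as a quotient of a Drinfeld double, and then to identify the kernel via Theorem \ref{5.4}.

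Applying \cite[Theorem 1]{R2} to $(A, R)$, one obtains finite-dimensional Hopf subalgebras $B, H \subset A$ and a non-degenerate skew pairing $\tau : B \otimes H \to k$ induced by $R$, together with a Hopf algebra surjection from the generalized quantum double $B \cmdbicross_\tau H$ onto $A$ sending the canonical $R$-matrix to $R$. Since $A$ is pointed, so are $B$ and $H$. Set $F := G(B)$ and $G := G(H)$; using the non-degeneracy of $\tau|_{F \times G}$ together with the quasitriangular constraints, one deduces that $F$ and $G$ are finite abelian.

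Next, I would identify $B$ with $\cmdfrakB(V) \cmddotrtimes F$ and $H$ with $\cmdfrakB(W) \cmddotrtimes G$. By Lemma \ref{2.2}, $V := \cmdYD(B) \subset \cmdYD(A)$ and $W := \cmdYD(H) \subset \cmdYD(A)$; since $k$ is algebraically closed of characteristic zero and $F, G$ are finite abelian, these are of diagonal type. Arguing that $B$ and $H$ inherit from $A$ the property of being generated by skew primitives (which uses the special position of $B, H$ inside $A$ determined by the $R$-matrix), one obtains Hopf algebra surjections from the bosonized pre-Nichols algebras onto $B, H$. By Proposition \ref{3.2}, the non-degeneracy of $\tau$ on the degree-positive parts forces the pre-Nichols algebras to coincide with the full Nichols algebras, so $B = \cmdfrakB(V) \cmddotrtimes F$ and $H = \cmdfrakB(W) \cmddotrtimes G$. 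With $\tau_0 := \tau|_{F \times G}$ non-degenerate, we are in the setting of Remark \ref{3.4}, whence $B \cmdbicross_\tau H$ coincides with the Drinfeld double $\cmdcalD := D(H)$. The Hopf surjection $\cmdcalD \twoheadrightarrow A$ has kernel $\cmdfraka$; since the embeddings $V \subset B \hookrightarrow A$ and $W \subset H \hookrightarrow A$ give $V \cap \cmdfraka = 0 = W \cap \cmdfraka$, the ideal $\cmdfraka$ is thin. Condition (\ref{eq5.6}) is satisfied by Remark \ref{7.1}, so Theorem \ref{5.4} gives $\cmdfraka = \cmdfraka(T, Z, \zeta)$, as required.

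The main obstacle is the structural analysis of $B$ and $H$: extracting, from the bare hypotheses that $A$ is minimal quasitriangular and generated by skew primitives, the fact that $B$ and $H$ are coradically graded bosonizations of Nichols algebras over abelian groups. The pointedness and finite-dimensionality of $B, H$ are immediate from Radford's theorem, and the Nichols (rather than pre-Nichols) property of the graded pieces is secured by Proposition \ref{3.2}; the most delicate parts are the abelianness of $F$ and $G$ together with the fact that $B$ and $H$ are generated by skew primitives, which both rely on the interplay between the $R$-matrix and the coradical filtration of $A$ in characteristic zero.
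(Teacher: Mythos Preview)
Your proposal is correct and follows essentially the same route as the paper's proof: Radford's structure theorem (it is \cite[Theorem~2]{R2}, not Theorem~1) to present $A$ as a quotient of a double, reduction of $B$ and $H$ to bosonized Nichols algebras over finite abelian groups, and then Theorem~\ref{5.4} to identify the kernel as $\cmdfraka(T,Z,\zeta)$.

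The paper handles the two steps you flag as delicate by means of dedicated lemmas. For the bosonization $B = R \cmddotrtimes F$, the non-degenerate $\tau$ yields Hopf isomorphisms $\tau^l : B^{\cmdcop} \to H^*$ and $\tau^r : H \to (B^{\cmdcop})^*$; composing with restriction to the coradical shows $kF \cong (kG)^*$ (hence $F$, $G$ abelian) and produces the retractions $B \to kF$, $H \to kG$ needed for Radford's biproduct theorem \cite[Theorem~3]{R1}. For the inheritance of ``generated by skew primitives'', the paper proves a general Lemma (\ref{7.4}): if $B \subset A$ is a Hopf subalgebra with $\cmdYD(B)$ stable under the $G(A)$-action and $A$ is generated by skew primitives, then so is $B$. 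The required $\Gamma$-stability of $V$ and $W$ is established in Lemma~\ref{7.5}(1), using that $\tau$ vanishes on $F \otimes S^+$ and $R^+ \otimes G$ (since $R^+$, $S^+$ are nilpotent). Finally, the Nichols conclusion comes from Lemma~\ref{7.5}(3), which is the appropriate generalization of Proposition~\ref{3.2} to this setting.
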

\begin{proof}
  We will use Lemmas \ref{7.4} and \ref{7.5} which will be formulated and proved later.
  
  Let $A$ be a minimal quasitriangular Hopf algebra.
  By \cite[Theorem 2]{R2}, we have a Hopf algebra quotient
  \begin{equation} \label{eq7.3}
    \pi : D ( H ) = B \cmdbicross_{\tau} H \rightarrow A
  \end{equation}
  of some quantum double $ D ( H ) $ such that $ \pi \! \mid_B $, $ \pi \! \mid_H $ are both embeddings.
  Here we suppose that $ D ( H ) $ is constructed by some finite-dimensional Hopf algebras $B$, $H$ and a non-degenerate 
  skew pairing $ \tau : B \otimes H \rightarrow k $; see \cite[Sect. 2]{DT}.
  Let
  \begin{eqnarray}
    \tau^l : B^{\cmdcop} \cmdarrow^{\simeq} H^* , \quad \tau^l ( b ) ( h ) = \tau ( b , h ), \label{eq7.4} \\
    \tau^r : H \cmdarrow^{\simeq} ( B^{\cmdcop} )^* , \quad \tau^r ( h ) ( b ) = \tau ( b , h ), \label{eq7.5}
  \end{eqnarray}
  denote the Hopf algebra isomorphisms associated to $ \tau $.
  
  Assume that $A$ is pointed.
  Then, $B$ and $H$ are pointed, too.
  Set $ F = G ( B ) $, $ G = G ( H ) $.
  Then the kernel of the restriction map $ H^* \rightarrow ( kG )^* $ is the Jacobson radical of $ H^* $,
  in which $ \tau^l ( f - 1 ) $ cannot be contained if $ 1 \ne f \in F $,
  since $ f - 1 $ cannot be nilpotent.
  Therefore, $ \tau^l $ induces a Hopf algebra mono $ kF \hookrightarrow ( kG )^* $.
  Similarly, $ \tau^r $ induces a mono $ kG \hookrightarrow ( kF )^* $.
  The two monos, being dual to each other, must be isomorphisms.
  Hence, $F$ and $G$ are abelian,
  and $ \tau $ restricts to a non-degerate pairing $ F \times G \rightarrow k^{\times} $.
  Since the maps
  \begin{equation*}
    B \cmdarrow^{\simeq}_{\tau^l} H^* \cmdarrow_{\cmdres} ( kG )^*, \quad H \cmdarrow^{\simeq}_{\tau^r} B^* \cmdarrow_{\cmdres} ( kF )^*,
  \end{equation*}
  combined with the isomorphisms just obtained, give Hopf algebra retractions $ B \rightarrow kF $, $ H \rightarrow kG $,
  we see from \cite[Theorem 3]{R1} that
  \begin{equation*}
    B = R \cmddotrtimes F, \quad H = S \cmddotrtimes G,
  \end{equation*}
  where $R$, $S$ are local irreducible braided Hopf algebras in $ {}^F_F \cmdcalYD $, $ {}^G_G \cmdcalYD $, respectively.
  Set
  \begin{equation*}
    V = P ( R ) \ ( = \cmdYD ( B ) ) , \quad W = P ( S ) \ ( = \cmdYD ( H ) ).
  \end{equation*}
  These are of diagonal type by our assumption on $k$.
  Since the augmentation ideals $ R^+ $, $ S^+ $ are nilpotent, we have
  \begin{equation*}
    \tau ( f , s ) = \varepsilon ( s ) , \quad \tau ( r , g ) = \varepsilon ( r ) \quad ( f \in F , g \in G , r \in R , s \in S ).
  \end{equation*}
  It follows by Lemma \ref{7.5} (1) that in $ D(H) $, $V$ and $W$ are stable under conjugation by $G$, $F$, respectively, so that
  \begin{eqnarray}
    g v g^{-1} &=& \tau ( v_{-1} , g ) v_0 \quad ( g \in G , v \in V ), \label{eq7.6} \\
    f w f^{-1} &=& \tau ( f^{-1} , w_{-1} ) w_0 \quad ( f \in F , w \in W ). \label{eq7.7}
  \end{eqnarray}
  Set $ \Gamma = F G $ in $A$.
  Then, $ \Gamma = G ( A ) $.
  By (\ref{eq7.6}), (\ref{eq7.7}), $V$ and $W$, embedded into $ \cmdYD ( A ) $ (see Lemma \ref{2.2}),
  are stable under $ \Gamma $-action.
  
  Assume that $A$ is generated by skew primitives.
  By the $ \Gamma $-stability shown above,
  Lemma \ref{7.4} implies that $R$, $S$ are generated by $V$, $W$, respectively.
  By Lemma \ref{7.5} (2), (3), $R$ and $S$ are Nichols,
  and $ \tau \! \mid_{V \otimes W} : V \otimes W \rightarrow k $ is non-degenerate.
  It follows that $ D ( H ) $ is so as given at the begininng of this section.
  Theorem \ref{7.2} now follows by Theorem \ref{5.4}.
\end{proof}

\begin{corollary} \label{7.3}
  If $A$ is a minimal quasitriangular pointed Hopf algebra such that the prime divisors of $ \left| G(A) \right| $
  are all greater than $7$, then it is of the form $ D(H)/\cmdfraka ( T , Z , \zeta )$ with $ H = \cmdfrakB ( W ) \cmddotrtimes G $,
  in which $ \cmdfraka ( T , Z , \zeta ) $ is skinny, that is, $Z$ and $ \zeta $ are zero.
\end{corollary}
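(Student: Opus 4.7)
The plan is to combine the structure Theorem~\ref{7.2} with the classification of Andruskiewitsch--Schneider~\cite{AS2} to put $A$ in the appropriate form, and then exploit Remark~\ref{2.1} to force the parameters $Z$ and $\zeta$ to be trivial. A minimal quasitriangular Hopf algebra is automatically finite-dimensional (Radford's description of the minimal quasitriangular Hopf subalgebra as a product of two finite-dimensional Hopf subalgebras associated to the $R$-matrix). Consequently, the hypothesis that every prime divisor of $|G(A)|$ exceeds $7$ puts $A$ within the scope of~\cite{AS2}, which establishes in particular that any such pointed Hopf algebra is generated in degree one, i.e., by skew primitives. Hence the assumption of Theorem~\ref{7.2} is fulfilled, and we obtain $A \simeq \cmdcalD/\cmdfraka(T,Z,\zeta)$ with $\cmdcalD = D(H)$ and $H = \cmdfrakB(W)\cmddotrtimes G$ finite-dimensional.

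It remains to verify that $Z=0$ (and hence $\zeta = 0$). By (\ref{eq5.1}) we have $Z \subset V_T \subset V'$ and $\zeta(Z) \subset W_T \subset W'$, so it suffices to show $V' = 0 = W'$. From the non-degeneracy of $\tau_0 : F \times G \to k^{\times}$, invoked at the start of the proof of Theorem~\ref{7.2}, we have $F \cap C = G \cap C = \{1\}$; since $T \subset C$, the natural maps $F, G \hookrightarrow \Gamma := F \times G / T$ are injective, and $\Gamma = G(A)$ by Proposition~\ref{3.5}(4). In particular, both $|F|$ and $|G|$ divide $|G(A)|$, so by hypothesis every prime divisor of $|F|$ and of $|G|$ exceeds $7$; both orders are therefore odd. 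Because $\cmdch k = 0$ and the Nichols algebras $\cmdfrakB(V)$, $\cmdfrakB(W)$ are finite-dimensional (being bosonization factors of Hopf subalgebras of the finite-dimensional $\cmdcalD = D(H)$), Remark~\ref{2.1} applied in ${}^F_F\cmdcalYD$ and in ${}^G_G\cmdcalYD$ respectively yields $V' = 0$ and $W' = 0$.

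Thus $Z = 0$ and $\zeta = 0$, and $\cmdfraka(T,Z,\zeta) = \cmdfraka(T,0,0) = (t-1 \mid t \in T)$ is skinny in the sense of Definition~\ref{5.1}. The proof is essentially a clean assembly of previously established results; the only external input is the generation-in-degree-one theorem of~\cite{AS2}, and I do not anticipate any serious obstacle. If anything, the step requiring slight care is confirming that the prime-divisor hypothesis passes from $|G(A)|$ down to $|F|$ and $|G|$, for which the non-degeneracy statement $F \cap C = G \cap C = \{1\}$ inherited from the proof of Theorem~\ref{7.2} is crucial.
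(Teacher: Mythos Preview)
Your proof is correct and follows essentially the same route as the paper's: invoke \cite{AS2} to obtain generation by skew primitives, apply Theorem~\ref{7.2}, and then use the oddness of $|G|$ (via Remark~\ref{2.1}, restated as \eqref{eq7.2}) to force $W'=0$ and hence $Z=0$. The paper's proof is terser---it simply cites Theorem~\ref{7.2} and \eqref{eq7.2}---while you spell out explicitly why the prime-divisor hypothesis on $|G(A)|$ descends to $|F|$ and $|G|$ via the embeddings $F,G\hookrightarrow\Gamma=G(A)$; this extra care is justified and fills in what the paper leaves implicit.
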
 

\begin{proof}
  As was proved by Andruskiewitsch and Schneider \cite[Theorem 5.5]{AS2}, $A$ is necessarily generated by 
  skew-primitives under the assumption. Hence the conclusion follows by Theorem \ref{7.2} and \eqref{7.2}.
\end{proof}

The referee kindly pointed out that our proof of Theorem \ref{7.2} answers in the positive a folklore 
question, as is formulated by the following corollary, under the hypothesis of generation by skew-primitives.

\begin{corollary} \label{7.3a}
  Let $A$ be a finite-dimensional pointed Hopf algebra whose dual $A^{*}$ as well is pointed. 
  Then $A$ and $A^{*}$ are both bosonizations of Nichols algebras by finite abelian groups, provided
  that they are generated by skew-primitives.
\end{corollary}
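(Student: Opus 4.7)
My plan is to apply the structural half of the proof of Theorem \ref{7.2} directly to the Drinfeld double $ \cmdcalD := D ( A ) = A^{* \cmdcop} \cmdbicross_{\tau} A $, where $ \tau $ is the (non-degenerate) evaluation skew pairing. Setting $ B := A^{* \cmdcop} $ and $ H := A $, both are finite-dimensional pointed Hopf algebras under our hypotheses. My goal is to show that these hypotheses alone suffice to rerun the portion of the proof of Theorem \ref{7.2} that produces bosonization decompositions for $B$ and $H$.

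The key observation will be that the stretch of the proof of Theorem \ref{7.2} running from ``Assume that $A$ is pointed'' down to the identifications $ B = R \cmddotrtimes F $, $ H = S \cmddotrtimes G $ uses only the pointedness of $B$ and $H$ together with the non-degenerate skew pairing $\tau$; it uses neither the surjection $\pi$ nor the minimal-quasitriangular hypothesis. Rerunning that step with our $B$, $H$, $\tau$ should yield that $ F := G ( A^* ) $ and $ G := G ( A ) $ are finite abelian groups (by the Jacobson-radical trick: $\tau^l ( f - 1 )$ cannot be nilpotent in $H^*$, so $\tau^l$ restricts to an embedding $ k F \hookrightarrow ( k G )^* $, dually for $\tau^r$; these mutually dual monos are then forced to be isomorphisms), and that $ B = R \cmddotrtimes F $, $ H = S \cmddotrtimes G $ for local irreducible braided Hopf algebras $ R \in {}^F_F \cmdcalYD $, $ S \in {}^G_G \cmdcalYD $, via Radford's projection theorem \cite[Theorem 3]{R1}.

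Next I would exploit the assumption that both $A$ and $A^*$ are generated by skew primitives. Via the above decompositions, this is equivalent to $S$ being generated by $ W := P ( S ) $ and $R$ being generated by $ V := P ( R ) $, where the transfer from $A^*$ to $A^{* \cmdcop}$ is automatic since the two share the same underlying algebra and the same set of skew-primitive elements, up to interchanging the two distinguished grouplikes. Applying Lemma \ref{7.5}(2),(3) exactly as in the final paragraph of the proof of Theorem \ref{7.2} then forces $ R = \cmdfrakB ( V ) $ and $ S = \cmdfrakB ( W ) $, giving $ A = \cmdfrakB ( W ) \cmddotrtimes G $ directly and $ A^{* \cmdcop} = \cmdfrakB ( V ) \cmddotrtimes F $, from which $A^*$ inherits the desired bosonization by the abelian group $F$ over the Nichols algebra of $V$ taken with the opposite braiding.

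The main obstacle I anticipate is the bookkeeping verification that Lemma \ref{7.5} applies intrinsically in our setting, i.e.\ without recourse to the auxiliary quotient $A$ and to the group $ \Gamma = F G \subset G ( A ) $ that appeared in the original proof of Theorem \ref{7.2}; since the conclusions of that lemma concern only $R$, $S$, and $ \tau \mid_{ V \otimes W } $, this should reduce to unpacking the quoted lemma in the present setup.
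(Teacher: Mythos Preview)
Your proposal is correct and follows essentially the same approach as the paper: the paper's one-line proof says precisely to replace the $H$ and $B$ of the proof of Theorem \ref{7.2} with the present $A$ and $(A^{*})^{\cmdcop}$, which is exactly what you do. Your observation that Lemma \ref{7.4} can be bypassed (since generation by skew primitives is now assumed directly for $B$ and $H$, rather than inherited from an ambient minimal quasitriangular $A$) is accurate, and the residual implication ``$H=S\cmddotrtimes G$ generated by skew primitives $\Rightarrow$ $S$ generated by $P(S)$'' that you invoke is the standard fact that every $(1,g)$-skew primitive of $H$ lies in $kG + P(S)$.
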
 

\begin{proof}
  This indeed follows by the proof of Theorem \ref{7.2} since we may replace the $H$ and the $B$ of the proof
  with the present $A$ and $(A^{*})^{\mathrm{cop}}$.
  \end{proof}

For the following Lemmas \ref{7.4} and \ref{7.5}, which we have used above, $k$ may be an arbitrary field.

\begin{lemma} \label{7.4}
  Let $A$ be a pointed Hopf algebra with $ U = \cmdYD ( A ) $ ($ \in {}^{G ( A )}_{G ( A )} \cmdcalYD $).
  Assume that $U$ is semisimple in $ {}^{G ( A )}_{G ( A )} \cmdcalYD $;
  this holds true if $ \cmdch k = 0 $ and $ G ( A ) $ is finite.
  Let $ B \subset A $ be a Hopf subalgebra with $ V = \cmdYD ( B ) $ ($ \in {}^{G ( B )}_{G ( B )} \cmdcalYD $);
  then $ V \subset U $ (see Lemma \ref{2.2}).
  Assume that $V$ is stable in $U$ under the $ G ( A ) $-action.
  If $A$ is generated by skew primitives, then $B$ is, too.
\end{lemma}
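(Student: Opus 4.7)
The plan is to pass to the associated graded Hopf algebras and reduce the problem to showing that a certain graded braided Hopf subalgebra of $\cmdfrakB(U)$ equals $\cmdfrakB(V)$. Since $A$ is pointed and generated by skew primitives, $\cmdgr A = \cmdfrakB(U) \cmddotrtimes G(A)$. The Hopf subalgebra $B \subset A$ is itself pointed with $G(B) = B \cap G(A)$, and by the standard compatibility of coradical filtrations with subcoalgebra inclusions, $B_n = B \cap A_n$; hence $\cmdgr B$ embeds as a graded Hopf subalgebra of $\cmdgr A$. Writing $\cmdgr B = R' \cmddotrtimes G(B)$, compatibility of the canonical projections onto the group algebras places $R'$ inside $\cmdfrakB(U)$ as a graded braided Hopf subalgebra with $R'(1) = V$. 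Since $P(\cmdfrakB(U)) = U$ is concentrated in degree $1$, any primitive of $R'$ must lie in $R' \cap U = V$, so $P(R') = V$. The subalgebra of $\cmdfrakB(U)$ generated by $V$ is a graded Hopf subalgebra whose primitives, lying in degree $1$, coincide with $V$; by the defining property of the Nichols algebra this subalgebra is $\cmdfrakB(V)$ itself. Thus $\cmdfrakB(V) \subset R' \subset \cmdfrakB(U)$, and ``$B$ is generated by skew primitives'' is equivalent to $R' = \cmdfrakB(V)$.

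To establish the latter I would construct a retraction of $\cmdfrakB(V) \hookrightarrow R'$ using the semisimplicity hypothesis. The $G(A)$-stability of $V$, combined with the fact that the $G(B)$-coaction on $V$ is the restriction of the $G(A)$-coaction on $U$ along $kG(B) \hookrightarrow kG(A)$, makes $V$ a genuine subobject of $U$ in ${}^{G(A)}_{G(A)} \cmdcalYD$. Semisimplicity of $U$ in this category furnishes a Yetter-Drinfeld complement $V^c$, hence a projection $p : U \to V$ in ${}^{G(A)}_{G(A)} \cmdcalYD$ acting as the identity on $V$. Functoriality of the Nichols algebra then upgrades $p$ to a surjective braided Hopf algebra map $\cmdfrakB(p) : \cmdfrakB(U) \to \cmdfrakB(V)$ identical on $V$; restricting to $R'$ yields a bialgebra map $q : R' \to \cmdfrakB(V)$ which acts as the identity on the subalgebra $\cmdfrakB(V) \subset R'$ and is in particular surjective.

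The remaining task is to show $\cmdKer q = 0$, for which I would run the usual minimality argument on bi-ideals of connected graded bialgebras. If $\cmdKer q \ne 0$, pick $x \in (\cmdKer q)(n)$ nonzero of minimal positive degree. Writing $\Delta(x) = x \otimes 1 + 1 \otimes x + \sum x'_{(1)} \otimes x'_{(2)}$ with the mixed terms supported in bidegrees $(i, n-i)$ for $0 < i < n$, the coideal condition forces these terms into $(\cmdKer q)(i) \otimes R'(n-i) + R'(i) \otimes (\cmdKer q)(n-i)$, which vanishes by minimality. Hence $x$ is primitive, so $x \in V \cap \cmdKer q = 0$, a contradiction. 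Therefore $q$ is an isomorphism, $R' = \cmdfrakB(V)$, and $B$ is generated by skew primitives.

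The step I expect to be the main obstacle is securing the projection $p$ as a morphism in the \emph{full} category ${}^{G(A)}_{G(A)} \cmdcalYD$ rather than only in ${}^{G(B)}_{G(B)} \cmdcalYD$; this is precisely the point at which the hypothesis that $V$ is stable under the whole $G(A)$-action (not merely $G(B)$-stable as a built-in feature of $B$) is indispensable. Once this projection is in hand, the functoriality of $\cmdfrakB(\cdot)$ and the standard rigidity of connected graded bialgebras with respect to their primitives make the remainder of the argument routine.
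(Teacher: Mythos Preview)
Your proof is correct and shares its overall architecture with the paper's: both pass to the associated graded Hopf algebras, embed $R'$ (the paper writes $S$) into $\cmdfrakB(U)$, and use the semisimplicity of $U$ together with the $G(A)$-stability of $V$ to produce a retraction $\cmdfrakB(U) \to \cmdfrakB(V)$ restricting to $q : R' \to \cmdfrakB(V)$. The difference lies in how the equality $R' = \cmdfrakB(V)$ is then established. The paper invokes Radford's biproduct theorem \cite[Theorem~3]{R1}, noting that its proof carries over verbatim to the braided setting, to obtain a decomposition $R' \simeq Q \cmddotrtimes \cmdfrakB(V)$ with $Q = R'/R'\cmdfrakB(V)^+$, and then observes that if $Q \ne k$ it would contain a nonzero primitive on which $\cmdfrakB(V)$ coacts trivially, yielding a primitive of $R'$ outside $V$. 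Your route is more elementary: you show directly that $\cmdKer q = 0$ via the standard ``a minimal-degree element of a graded coideal in a connected graded coalgebra is primitive'' argument, bypassing any braided structure theorem. Both approaches ultimately hinge on the same fact $P(R') = V$; yours trades the structural decomposition for a short self-contained contradiction, while the paper's makes the role of the quotient coalgebra $Q$ explicit.
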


\begin{proof}
  We may replace $A$, $B$ with the associated graded Hopf algebras, so that
  \begin{equation*}
    A = \cmdfrakB ( U ) \cmddotrtimes G ( A ) , \quad B = S \cmddotrtimes G ( B ), 
  \end{equation*}
  where $S$ is a braided graded Hopf algebra in $ {}^{G ( B )}_{G ( B )} \cmdcalYD $ such that $ S ( 0 ) = k $, $ S ( 1 ) = V $.
  Note that $V$ generates the Nichols algebra $ \cmdfrakB ( V ) $ in $S$.
  We need to prove $ \cmdfrakB ( V ) = S $.

  By the assumptions given above, $V$ is a subobject of $U$ in $ {}^{G ( A )}_{G ( A )} \cmdcalYD $,
  and we have a retraction $ U \rightarrow V $,
  which uniquely extends to a retraction $ \tau : \cmdfrakB ( U ) \rightarrow \cmdfrakB ( V ) $ of braided graded Hopf algebras.
  Note that Radford's Theorem \cite[Theorem 3]{R1} can be proved, by rephrasing his proof word by word, in the genralized, braided context.
  Apply the thus generalized theorem to $ \cmdfrakB ( V ) \subset S $ with the retraction $ \pi \! \mid_S : S \rightarrow \cmdfrakB ( V ) $.
  Then we obtain an isomorphism
  \begin{equation} \label{eq7.8}
    S \simeq Q \cmddotrtimes \cmdfrakB ( V ).
  \end{equation}
  Regarded just as a coalgebra,
  the right-hand side of (\ref{eq7.8}) denotes the smash coproduct in the braided context,
  associated to a braided left $ \cmdfrakB ( V ) $-comodule coalgebra $Q$ ($ = S / S \cmdfrakB ( V )^+ $) in $ {}^{G ( B )}_{G ( B )} \cmdcalYD $.
  If $ \cmdfrakB ( V ) \subsetneqq S $ or $ k \subsetneqq Q $, then $Q$ contains a non-zero primitive,
  say $u$, on which the irreducible $ \cmdfrakB ( V ) $ coacts trivially.
  This $u$ is a primitive in $S$ which is not contained in $V$; this is absurd.
  Therefore, $ \cmdfrakB ( V ) = S $.
\end{proof}

\begin{lemma} \label{7.5}
  Let $J$, $K$, be Hopf algebras with bijective antipode.
  Let $ R~( \in {}^J_J \cmdcalYD ) $, $ S~( \in {}^K_K \cmdcalYD ) $ be braided Hopf algebras.
  Let
  \begin{equation*}
    \tau : ( R \cmddotrtimes J ) \otimes ( S \cmddotrtimes K ) \rightarrow k
  \end{equation*}
  be a skew pairing, and let
  \begin{equation*}
    \cmdcalD = ( R \cmddotrtimes J ) \cmdbicross_{\tau} ( S \cmddotrtimes K )
  \end{equation*}
  denote the associated generalized quantum double,
  which is constructed just as the $ \cmdcalD $ in (\ref{eq3.4}); see \cite[Sect. 2]{DT}.
  Set
  \begin{equation*}
    V = P ( R ) \ ( \in {}^J_J \cmdcalYD ) , \quad W = P ( S ) \ ( \in {}^K_K \cmdcalYD ).
  \end{equation*}
  \begin{enumerate}
    \item Assume that
    \begin{eqnarray}
      \tau ( a , s ) &=& \varepsilon ( a ) \varepsilon ( s ) \quad ( a \in J , s \in S ), \label{eq7.9} \\
      \tau ( r , x ) &=& \varepsilon ( r ) \varepsilon ( x ) \quad ( x \in K , r \in R ). \label{eq7.10}
    \end{eqnarray}
    Then in $ \cmdcalD $,
    $V$ and $W$ are stable under conjugation by $K$, $J$, respectively, so that
    \begin{eqnarray*}
      x_1 v \cmdcalS ( x_2 ) &=& \tau ( v_{-1} , x ) v_0 \quad ( x \in K , v \in V ),\\
      a_1 w \cmdcalS ( a_2 ) &=& \tau ( \cmdcalS ( a ) , w_{-1} ) w_0 \quad ( a \in J , w \in W ).
    \end{eqnarray*}
    See (\ref{eq2.1}), (\ref{eq2.2}) for the notation used above.
    \item Assume (\ref{eq7.9}), (\ref{eq7.10}).
    Let
    \begin{equation*}
      \Delta ( t ) = t_{(1)} \otimes t_{(2)} \quad ( t \in R \mbox{ or } t \in S )
    \end{equation*}
    denote the coproducts of the braided coalgebras $R$, $S$.
    Let $ \bar{\cmdcalS} $ denote the composite-inverse of the antipode $ \cmdcalS $ of $ R \cmddotrtimes J $.
    Then,
    \begin{eqnarray}
      \tau ( r , s s^{\prime} ) &=& \tau ( r_{(1)} , s^{\prime} ) \tau ( r_{(2)} , s ), \label{eq7.11} \\
      \tau ( \bar{\cmdcalS} ( r ) \bar{\cmdcalS} ( r^{\prime} ) , s ) &=& \tau ( \bar{\cmdcalS} ( r ) , s_{(1)} ) \tau ( \bar{\cmdcalS} ( r^{\prime} ) , s_{(2)} ), \label{eq7.12} \\
      \tau ( \bar{\cmdcalS} ( r ) a , s x ) &=& \tau ( \bar{\cmdcalS} ( r ) , s ) \tau ( a , x ), \label{eq7.13}
    \end{eqnarray}
    where $ a \in J $, $ x \in K $, $ r \in R $, $ s \in S$.
    Moreover, $ \tau $ is non-degenerate if and only if the restrictions
    \begin{equation*}
      \tau \! \mid_{J \otimes K} : J \otimes K \rightarrow k , \quad \tau \! \mid_{R \otimes S} : R \otimes S \rightarrow k
    \end{equation*}
    are both non-degenerate.
    \item Assume that $R$, $S$ are generated by $V$, $W$, respectively, and that
    \begin{equation} \label{eq7.14}
      \tau ( a , w ) = 0 = \tau ( v , x ) \quad ( a \in J , x \in K , v \in V , w \in W ).
    \end{equation}
    If $ \tau \! \mid_{J \otimes K} $ is non-degenerate, then the following are equivalent to each other:
    \begin{itemize}
      \item [(a)] $ \tau \! \mid_{R \otimes S} $ is non-degenerate;
      \item [(b)] $ \tau \! \mid_{V \otimes W} $ is non-degenerate, and $R$, $S$ are both Nichols
      (or more precisely, the inclusions $ V \hookrightarrow R $, $ W \hookrightarrow S $ uniquely extend to isomorphisms, $ \displaystyle \cmdfrakB ( V ) \cmdarrow^{\simeq} R $, $ \displaystyle \cmdfrakB ( W ) \cmdarrow^{\simeq} S $);
      \item [(c)] $  \tau \! \mid_{\bar{R} \otimes S} $ is non-degenerate, where $ \bar{R} = \bar{\cmdcalS} ( R ) $;
      \item [(e)] $ \tau $ is non-degenerate.
    \end{itemize}
  \end{enumerate}
\end{lemma}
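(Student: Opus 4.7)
The plan is to prove all three parts by direct manipulation in the generalized quantum double $\cmdcalD = (B \otimes H)^{\sigma}$ with cocycle $\sigma(b \otimes h, c \otimes l) = \varepsilon(b)\,\tau(c, h)\,\varepsilon(l)$, exploiting the bosonization coproduct $\Delta_B(r) = r_{(1)} r_{(2),-1} \otimes r_{(2),0}$ for $r \in R$ (and analogously on $H$) together with (7.9), (7.10) to collapse cross-terms.

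For part (1), one computes $(1 \otimes x)(v \otimes 1) = \sum \tau(v_{[1]}, x_1)(v_{[2]} \otimes x_2)\tau^{-1}(v_{[3]}, x_3)$ using the three-term expansion $\Delta^{(2)}_B(v) = v \otimes 1 \otimes 1 + v_{-1} \otimes v_0 \otimes 1 + (v_{-1})_{(1)} \otimes (v_{-1})_{(2)} \otimes v_0$ for $v \in V$. The first term dies by (7.10), since $\tau(v, x_1) = \varepsilon(v)\varepsilon(x_1) = 0$. For the third term, $\tau^{-1}(v_0, x_3) = \tau(\cmdcalS(v_0), x_3)$ expands via $\cmdcalS(v_0) = -\cmdcalS_J(v_{0,-1})v_{0,0}$, and applying multiplicativity of $\tau$ in the first slot, (7.10) applied to $v_{0,0} \in V$ makes the whole thing vanish. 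Only the middle term survives, yielding $xv = \sum \tau(v_{-1}, x_1)\,v_0 x_2$ in $\cmdcalD$, whence $x_1 v \cmdcalS(x_2) = \tau(v_{-1}, x)\,v_0$. The formula for $w$ follows by the symmetric computation of $(w \otimes 1)(1 \otimes a)$.

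For part (2), identity (7.11) is obtained by writing $\tau(r, ss') = \tau(r^1, s')\tau(r^2, s)$ with $\Delta_B(r) = r_{(1)} r_{(2),-1} \otimes r_{(2),0}$, expanding by multiplicativity of $\tau$ in the first slot, and using (7.9) to collapse $\tau(r_{(2),-1}, s'_2)$ to $\varepsilon(r_{(2),-1})\varepsilon(s'_2)$; the coaction counit then assembles the result. Formula (7.12) follows analogously, using $\Delta_B(\bar{\cmdcalS}(r)) = \bar{\cmdcalS}(r_{(2),0}) \otimes \bar{\cmdcalS}(r_{(1)} r_{(2),-1})$ and (7.10) in place of (7.9). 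Formula (7.13) is obtained by splitting $\tau(\bar{\cmdcalS}(r) a, sx)$ using multiplicativity of $\tau$ in both slots and killing every cross-term via (7.9), (7.10). Since $B = \bar R \cdot J$ and $H = S \cdot K$ as vector spaces, (7.13) identifies $\tau$ with the external tensor product of $\tau\!\mid_{\bar R \otimes S}$ and $\tau\!\mid_{J \otimes K}$, so non-degeneracy of $\tau$ is equivalent to non-degeneracy of both these restrictions (equivalently, of $\tau\!\mid_{R \otimes S}$ and $\tau\!\mid_{J \otimes K}$, since $\bar{\cmdcalS}: R \to \bar R$ is bijective).

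For part (3), (a)$\Leftrightarrow$(b) is \cite[Proposition 2.6(1)]{M2}, provided one first deduces from (a) that $\tau\!\mid_{V \otimes W}$ is non-degenerate: if $v \in V$ with $\tau(v, W) = 0$, then $\Delta_R(v) = v \otimes 1 + 1 \otimes v$ combined with (7.11) gives $\tau(v, ss') = \tau(v, s')\varepsilon(s) + \varepsilon(s')\tau(v, s)$, so induction on degree (using that $S$ is generated by $W$) yields $\tau(v, S) = 0$ and hence $v = 0$. The standing hypothesis that $\tau\!\mid_{J \otimes K}$ is non-degenerate together with part (2) gives (a)$\Leftrightarrow$(e), and the parallel argument via (7.12) and the decomposition $B = \bar R \cdot J$ gives (c)$\Leftrightarrow$(e). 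The principal technical hurdle throughout is the bookkeeping between the braided coproduct $\Delta_R$ and the bosonization coproduct $\Delta_B$, and their interaction with $\bar{\cmdcalS}$ and $\tau^{-1}$; once (7.9) and (7.10) are invoked at the right step to collapse all $J$- and $K$-pieces to their counits, the braided manipulations reduce to standard skew-pairing calculations.
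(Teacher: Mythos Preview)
Your computations for part~(1) and for the identities (7.11)--(7.13) in part~(2) are fine and match the spirit of the paper's proof (which defers the details to \cite[Lemmas~2.2,~2.3]{M2}). The problem is the last sentence of your part~(2): from (7.13) you correctly deduce that $\tau$ is non-degenerate if and only if $\tau\!\mid_{\bar R\otimes S}$ and $\tau\!\mid_{J\otimes K}$ are, but then you assert that non-degeneracy of $\tau\!\mid_{\bar R\otimes S}$ is \emph{equivalent} to non-degeneracy of $\tau\!\mid_{R\otimes S}$ ``since $\bar{\cmdcalS}\colon R\to\bar R$ is bijective''. That does not follow. Bijectivity of $\bar{\cmdcalS}$ only tells you that $\tau\!\mid_{\bar R\otimes S}$ is non-degenerate if and only if the pairing $(r,s)\mapsto\tau(\bar{\cmdcalS}(r),s)$ on $R\otimes S$ is non-degenerate, and this pairing is \emph{not} $\tau\!\mid_{R\otimes S}$. (Concretely, $\tau(\bar{\cmdcalS}(r),s)=\tau(r,\cmdcalS(s))$, and $\cmdcalS(s)$ does not lie in $S$ in general.) This gap then propagates to your (a)$\Leftrightarrow$(e) in part~(3).

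The paper closes this gap by a symmetry trick: it applies the same analysis to the convolution-inverse skew pairing $\tau^{-1}\colon H\otimes B\to k$, $h\otimes b\mapsto\tau^{-1}(b,h)$. The analogue of (7.13) for $\tau^{-1}$ (with the roles of $B$ and $H$ swapped) yields that $\tau^{-1}$ is non-degenerate if and only if $\tau\!\mid_{J\otimes K}$ and $\tau\!\mid_{R\otimes S}$ are, this time with $R$ rather than $\bar R$ appearing. Since $\tau$ and $\tau^{-1}$ are non-degenerate simultaneously, one obtains the claimed equivalence in part~(2), and hence (a)$\Leftrightarrow$(c)$\Leftrightarrow$(e) in part~(3). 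A smaller point: your derivation that (a) forces $\tau\!\mid_{V\otimes W}$ to be non-degenerate only treats the left radical via (7.11); the right radical requires the companion argument using (7.12) and the identity $\tau(\bar{\cmdcalS}(v),s)=-\tau(v,s)$ for $v\in V$, as the paper notes.
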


\begin{proof}
  (1) This is directly verified.
  
  (2) Set $ B = R \cmddotrtimes J $, $ H = S \cmddotrtimes K $.
  We can prove, in the present situation, the same formulae as those given in \cite[Lemma 2.2]{M2},
  which includes (\ref{eq7.11}), just in the same way.
  The result can apply to the skew pairing
  \begin{equation} \label{eq7.15}
    H \otimes B \rightarrow k , \quad h \otimes b \mapsto \tau^{-1} ( b , h ),
  \end{equation}
  where $ \tau^{-1} $ denotes the convolution-inverse of $ \tau $.
  Obtained are the same formulae as those given in \cite[Lemma 2.3]{M2}, which include (\ref{eq7.12}), (\ref{eq7.13}).
  
  We see from (\ref{eq7.13}) that $ \tau $ is non-degenerate if and only if $ \tau \! \mid_{J \otimes K} $ and $ \tau \! \mid_{ \bar{R} \otimes S } $ are both non-degenerate, where $ \bar{R} = \bar{\cmdcalS} ( R ) $.
  Apply this result to the skew pairing (\ref{eq7.15}),
  using $ \tau^{-1} ( b , h ) = \tau ( b , \cmdcalS ( h ) ) $.
  Then we see that $ \tau^{-1} $ is non-degenerate if and only if $ \tau \! \mid_{J \otimes K} $, $ \tau \! \mid_{R \otimes S} $ are so.
  The desired equivalence follows since $ \tau $ and $ \tau^{-1} $ are non-degenerate at the same time.
  
  (3) We remark that (a)-(c) above are the same as those given in \cite[Proposition 2.6(1)]{M2},
  in which these conditions together with another (d) are proved to be equivalent,
  in a more restricted situation,
  but without assuming that $ \tau \! \mid_{J \otimes K} $ is non-degenerate.
  By the proof of (2) above,
  we see (a) $ \Leftrightarrow $ (c) $ \Leftrightarrow $ (e).
  Note that if $ v \in V $, then $ \bar{\cmdcalS} ( v ) = - v_0 \bar{\cmdcalS} ( v_{-1} ) $,
  and so $ \tau ( \bar{\cmdcalS} ( v ) , s ) = - \tau ( v , s ) $ for $ s \in S $; see \cite[(2.12)]{M2}.
  Then we see from (\ref{eq7.11}), (\ref{eq7.12}) that (a) (or (c)) implies that $ \tau \! \mid_{V \otimes W} $ is non-degenerate.
  
  Since $ \tau \! \mid_{V \otimes W} $ satisfies \cite[(2.1), (2.2)]{M2},
  it follows by \cite[Proposition 2.1]{M2} that $ \tau \! \mid_{J \otimes K} $, $ \tau \! \mid_{V \otimes W} $ uniquely extend to such a skew pairing
  \begin{equation*}
    \tilde{\tau} : ( T ( V ) \cmddotrtimes J ) \otimes ( T ( W ) \cmddotrtimes K ) \rightarrow k
  \end{equation*}
  that satisfies the same assumption as (\ref{eq7.14}).
  Obviously, $ \tilde{\tau} $ factors through our $ \tau $.
  Assume that $ \tau \! \mid_{V \otimes W} $ ($ = \tilde{ \tau } \! \mid_{V \otimes W} $) is non-degenerate.
  It then follows by \cite[Proposition 2.6(1), (b) $ \Rightarrow $ (a)]{M2} that the radicals of $ \tilde{\tau} \! \mid_{T ( V ) \otimes T ( W )} $ coincide the ideals defining $ \cmdfrakB ( V ) $, $ \cmdfrakB ( W ) $.
  Therefore, (a) $ \Rightarrow $ (b).
  The converse is proved in \cite{M2}.
\end{proof}

\section{Gelaki's classification of minimal triangular pointed Hopf~ algebras}

We will reproduce from our Theorem \ref{7.2} Gelaki's classification results \cite[Theorem 4.4, Theorem 5.1]{G} of minimal triangular pointed Hopf algebras in characteristic zero.

\subsection{}

In this subsection, $k$ may be arbitrary.
Let $A$ be a minimal quasitriangular Hopf algebra with $R$-matrix $ \cmdcalR $.
Then we have as in (\ref{eq7.3}), a quotient
\begin{equation*}
  \pi : D ( H ) = B \cmdbicross_{\tau} H \rightarrow ( A , \cmdcalR )
\end{equation*}
of some quantum double $ D ( H ) $ such that $ \pi \! \mid_B $, $ \pi \! \mid_H $ are embeddings.
Assume that $ \pi \! \mid_B : B \rightarrow A $, $ \pi \! \mid_H : H \rightarrow A $ are isomorphisms.
This assumption is necessarily satisfied if $ ( A , \cmdcalR ) $ is minimal triangular.
Let us identify $B$, $H$ both with $A$ via the isomorphisms.
Then the non-degenerate skew pairing $ \tau : A \otimes A \rightarrow A $ is a co-quasitriangular structure on $A$,
whence the element $ \tau^* $ in $ A^* \otimes A^* $ is an $R$-matrix of $ A^* $.
Moreover the Hopf algebra map \cite[p.3]{G}
\begin{equation*}
  f_{\cmdcalR} : ( A^* , \tau^* ) \rightarrow ( A^\cmdcop , \cmdcalR_{2 1} ) , \ f_\cmdcalR ( p ) = ( p \otimes \cmdid ) ( \cmdcalR )
\end{equation*}
arising from $\cmdcalR$ is an isomorphism with inverse $ \tau^l $; see (\ref{eq7.4}).
From this the next lemma follows.

\begin{lemma} \label{8.1}
  $ ( A , \cmdcalR ) $ is triangular if and only if
  \begin{equation*}
    \tau ( a_1 , b_1 ) \tau ( b_2 , a_2 ) = \varepsilon ( a ) \varepsilon ( b ) \quad ( a , b \in A )
  \end{equation*}
\end{lemma}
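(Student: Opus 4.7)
The plan is to translate triangularity of $(A,\cmdcalR)$ across the isomorphism $f_{\cmdcalR}$ to the corresponding condition on the $R$-matrix $\tau^{*}$ of $A^{*}$, and then unpack that condition using the convolution product on $A^{*}\otimes A^{*}$. Conceptually, what is being proved is that $\tau$ is ``cotriangular'' as a co-quasitriangular form on $A$ iff $\tau^{*}$ is a triangular $R$-matrix on $A^{*}$, and this is then dualized back to $A$ via $f_{\cmdcalR}$.

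First I would observe that the defining relation $\cmdcalR\cmdcalR_{21}=1\otimes 1$ for triangularity is symmetric under $\cmdcalR\leftrightarrow\cmdcalR_{21}$, so $(A,\cmdcalR)$ is triangular if and only if $(A^{\cmdcop},\cmdcalR_{21})$ is. Since $f_{\cmdcalR}:(A^{*},\tau^{*})\cmdarrow^{\simeq}(A^{\cmdcop},\cmdcalR_{21})$ has just been displayed as an isomorphism of quasitriangular Hopf algebras, $f_{\cmdcalR}\otimes f_{\cmdcalR}$ is an algebra map sending $\tau^{*}\mapsto\cmdcalR_{21}$ and $(\tau^{*})_{21}\mapsto\cmdcalR$. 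Hence $(A,\cmdcalR)$ is triangular if and only if $\tau^{*}\cdot(\tau^{*})_{21}=1\otimes 1$ in $A^{*}\otimes A^{*}$.

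To finish, I would evaluate this last equality on an arbitrary $a\otimes b\in A\otimes A$. The unit of $A^{*}\otimes A^{*}$ is $\varepsilon\otimes\varepsilon$, which sends $a\otimes b$ to $\varepsilon(a)\varepsilon(b)$, and unwinding the convolution product of the two elements of $A^{*}\otimes A^{*}$ on the left turns it into $\tau(a_{1},b_{1})\,\tau(b_{2},a_{2})$. This produces exactly the identity asserted in the lemma.

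There is no real obstacle here: essentially the whole argument is forced once one has the isomorphism $f_{\cmdcalR}$ of quasitriangular Hopf algebras displayed above, and the only concrete manipulation is the one-line convolution expansion in the final step. The only mildly subtle point to keep straight is the $\cmdcop$ twist, i.e.\ that triangularity is invariant under $(H,\cmdcalR)\rightsquigarrow(H^{\cmdcop},\cmdcalR_{21})$, which must be checked before transporting the condition through $f_{\cmdcalR}$.
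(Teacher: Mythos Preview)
Your argument is correct and is exactly the intended one: the paper gives no detailed proof at all, merely noting that the lemma follows from the displayed isomorphism $f_{\cmdcalR}:(A^{*},\tau^{*})\cmdarrow^{\simeq}(A^{\cmdcop},\cmdcalR_{21})$, and what you have written is precisely the natural unpacking of that remark. Your convolution computation $(\tau^{*}\cdot(\tau^{*})_{21})(a\otimes b)=\tau(a_{1},b_{1})\tau(b_{2},a_{2})$ is correct, and the observation that triangularity passes between $(A,\cmdcalR)$ and $(A^{\cmdcop},\cmdcalR_{21})$ is the right way to handle the twist.
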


\subsection{}

In what follows we assume that $k$ is an algebraically closed field of characteristic zero.
Let $ ( A , \cmdcalR ) $ be a minimal triangular pointed Hopf algebra.
By \cite[Theorem 6.1]{AEG}, $A$ is generated by skew primitives.
Since it is in particular quasitriangular,
it follows by Theorem \ref{7.2} that $ ( A , \cmdcalR ) $ is isomorphic to some $D(H)/\cmdfraka ( T , Z , \zeta )$, 
where $ H = \cmdfrakB ( W ) \cmddotrtimes G $. 
Here we suppose as before that $D(H)$ is presented as a generalized quantum double
\begin{equation} \label{eq8.1}
  D(H) = ( \cmdfrakB ( V ) \cmddotrtimes F ) \cmdbicross_{\tau} ( \cmdfrakB ( W ) \cmddotrtimes G ),
\end{equation}
for which the index set $ \cmdcalI $ as well as the abelian groups $F$, $G$ are supposed to be finite,
and the skew-pairing $\tau$ to be non-degenerate.

Let $D(H)$, $\cmdfraka ( T , Z , \zeta )$ be as above.
We wish to know the conditions under which $D(H)/ \cmdfraka ( T , Z , \zeta )$ is indeed finite-dimensional and triangular.
As is seen from the argument in Section 8.1,
we may suppose by replacing $ \tau_0 ( = \tau \! \mid_{F \times G} ) $ that $F=G$,
the non-degenerate bimultiplicative map $ \tau_0 : G \times G \rightarrow k^{\times} $ is skew-symmetric in the sense
\begin{equation} \label{eq8.3}
  \tau_0 ( f , g ) = \tau_0 ( g , f )^{-1} \quad ( f , g \in G ),
\end{equation}
and the $T$ in $\cmdfraka ( T , Z , \zeta )$ is given by
\begin{equation*}
  T = \{ ( g^{-1} , g ) \in G \times G \mid g \in G \}.
\end{equation*}
In order that $ T \subset C $, we require
\begin{equation*}
  f_i = g^{-1}_i \quad ( i \in \cmdcalI ).
\end{equation*}
By (\ref{eq8.3}), we have $ \cmdcalI_T = \cmdcalI $, whence $ V_T = V $,
and we can take $V$ as the $Z$ in $\cmdfraka ( T , Z , \zeta )$, as we should.
Note that $ \Gamma = G \times G / T $ is naturally identified with $G$.
Then one sees that as objects in $ {}^\Gamma_\Gamma \cmdcalYD $,
\begin{equation} \label{eq8.4}
  V = \bigoplus_{i \in \cmdcalI} ( k v_i ; \widehat{g}^{-1}_i , g^{-1}_i ) , \quad W = \bigoplus_{i \in \cmdcalI} ( kw_i ; \widehat{g}_i , g_i ),
\end{equation}
where $ \widehat{g}_i = \tau_0 ( g_i, \ \ ) $; see Proposition \ref{3.5}.
Note that $ \tau_0 ( g_i , g_i ) = \pm 1 $ by (\ref{eq8.3}).
In order that $ \cmdfrakB ( V ) $, $ \cmdfrakB ( W ) $ are finite-dimensional, we require
\begin{equation*}
  \tau_0 ( g_i , g_i ) = -1 \quad ( i \in \cmdcalI );
\end{equation*}
see Remark \ref{7.1}.
Let
\begin{equation*}
  I_{\tau_0} = \{ g \in G \mid \tau_0 ( g , g ) = -1 \},
\end{equation*}
as in \cite{G} (in which our $\tau_0$, $I_{\tau_0}$ are denoted by $F$, $I_F$).
For each $ g \in I_{\tau_0} $, let
\begin{eqnarray*}
  \cmdcalI_g = \{ i \in \cmdcalI \mid g_i = g \}, \quad n_g = \left| \cmdcalI_g \right| \ ( \geq 0 ), \\
  V_g = \bigoplus_{i \in \cmdcalI_g} ( kv_i ; \widehat{g}^{-1}_i , g^{-1}_i ) , \quad W_g = \bigoplus_{i \in \cmdcalI_g} ( kw_i ; \widehat{g}_i , g_i ).
\end{eqnarray*}
Note that $ \cmdfrakB ( W_g ) $ equals the exterior algebra $ \bigwedge ( W_g ) $ of $ W_g $, whence
\begin{equation} \label{eq8.5}
  H = \cmdfrakB ( W ) \cmddotrtimes G = ( \cmdbarbigotimes_{g \in I_{\tau_0}} \bigwedge ( W_g ) ) \cmddotrtimes G.
\end{equation}

One sees from (\ref{eq8.4}) that there exists an isomorphism $ \displaystyle \zeta : V \cmdarrow^{\simeq} W $ in $ {}^\Gamma_\Gamma \cmdYD $ if and only if
\begin{equation} \label{eq8.6}
  n_g = n_{g^{-1}} \quad ( g \in I_{\tau_0} ) ,
\end{equation}
which we now assume to hold.
Such an isomorphism $\zeta$ is precisely a direct sum of linear isomorphisms $ \displaystyle M_g : V_g \cmdarrow^{\simeq} W_{g^{-1}} $,
where $ g \in I_{\tau_0} $.
We understand that $ M_g $ is represented as an $ n_g \times n_g $ invertible matrix with respect to the bases $ ( v_i \mid i \in \cmdcalI_g ) $, $ ( w_i \mid i \in \cmdcalI_{g^{-1}} ) $.

\begin{proposition} \label{8.2}
  Suppose $Z = V$, and that $\zeta$ is given by $ M_g~( g \in I_{\tau_0} ) $. Set $ \cmdfraka = \cmdfraka ( T , V , \zeta )$.
  We already know that $ D(H)/ \cmdfraka $, given the $R$-matrix inherited from $D(H)$, is a minimal quasitriangular Hopf algebra.
  \begin{enumerate}
    \item $ D(H)/ \cmdfraka $ is triangular if and only if
    \begin{equation} \label{eq8.7}
      M_{g^{-1}} = {}^t M_g \quad ( g \in \cmdcalI_{\tau_0} ).
    \end{equation}
    \item (Gelaki) Every minimal triangular pointed Hopf algebra is of this form $ D(H)/ \cmdfraka $ with (\ref{eq8.7}) satisfied.
  \end{enumerate}
\end{proposition}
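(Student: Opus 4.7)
The plan is to reduce Part~(1) to a computation on pairs of algebra generators via Lemma~\ref{8.1}, and then to deduce Part~(2) by combining Theorem~\ref{7.2} with \cite[Theorem 6.1]{AEG}. The nontrivial content is Part~(1), which boils down to a single matrix calculation.

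For Part~(1), by Lemma~\ref{8.1} the triangularity of $(D(H)/\cmdfraka, \cmdcalR)$ is equivalent to $\tau(a_1, b_1)\tau(b_2, a_2) = \varepsilon(a)\varepsilon(b)$ for all $a, b$ in $A = D(H)/\cmdfraka$, equivalently to $\tau(b, a) = \tau(\cmdcalS(a), b)$ (the convolution-inverse reformulation). The two sides of the latter, viewed as bilinear forms on $A \otimes A$, satisfy the same multiplicativity rules in each argument (inherited from $\tau$ being a skew pairing and $\cmdcalS$ reversing products and coproducts), so the identity extends from pairs of algebra generators of $A$ by the usual inductive argument. It thus suffices to check on pairs in $(\Gamma \cup V) \times (\Gamma \cup V)$, where we use $V = W$ in $A$ via $\zeta$. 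For $(g, f) \in \Gamma \times \Gamma$ the identity reads $\tau_0(g, f) = \tau_0(f^{-1}, g)$, which is the assumed skew-symmetry~(\ref{eq8.3}); for mixed pairs it is automatic by (\ref{eq3.3}).

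The essential $V \times V$ case I would handle as follows. Take $a = v_i$, $b = v_j$, write $g = g_i$, and note that $\cmdcalS(v_i) = v_i g_i$ follows from $\Delta(v_i) = v_i \otimes 1 + g_i^{-1} \otimes v_i$ (cf.~(\ref{eq8.4})). The pairing on $A$ is computed by transferring the first slot to $B$ and the second to $H$ via the isomorphisms $B \simeq A \simeq H$ (so $v_k \in A$ goes to $\zeta(v_k) \in W$ in the second slot). Using (\ref{eq3.2}), (\ref{eq3.3}), and the primitivity of $\zeta(v_k) \in H$ with grouplike $g_k^{-1}$, one finds
\[
  \tau(v_j, v_i) = \tau_{B \otimes H}(v_j, \zeta(v_i)), \qquad \tau(\cmdcalS(v_i), v_j) = \tau_{B \otimes H}(v_i, \zeta(v_j)),
\]
both of which vanish unless $g_j = g^{-1}$. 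In that case they evaluate to $(M_g)_{ji}$ and $(M_{g^{-1}})_{ij}$ respectively; equating them for all $i \in \cmdcalI_g$ and $j \in \cmdcalI_{g^{-1}}$ is exactly $M_{g^{-1}} = {}^tM_g$, i.e.~(\ref{eq8.7}).

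For Part~(2), let $(A, \cmdcalR)$ be minimal triangular and pointed. By \cite[Theorem 6.1]{AEG}, $A$ is generated by skew-primitives, so Theorem~\ref{7.2} presents $A$ as $D(H)/\cmdfraka(T, Z, \zeta)$ with $H = \cmdfrakB(W) \cmddotrtimes G$. Triangularity further forces $\pi|_B$ and $\pi|_H$ to be isomorphisms (Section~8.1), and the normalization carried out in Section~8.2 puts the presentation in the form the statement requires ($F = G$, $\tau_0$ skew-symmetric, $T = \{(g^{-1}, g) : g \in G\}$, $Z = V$, and $\zeta$ given by matrices $M_g$). Part~(1) then yields (\ref{eq8.7}). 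The subtlest point along the way is the reduction to generator pairs in Part~(1): the naive identity $\tau(a_1, b_1)\tau(b_2, a_2) = \varepsilon(a)\varepsilon(b)$ is not multiplicative in $a$ or $b$, but its convolution-inverse form $\tau(b, a) = \tau(\cmdcalS(a), b)$ is, which is why that reformulation is crucial.
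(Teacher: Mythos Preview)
Your argument follows the paper's line exactly: both use Lemma~\ref{8.1} to reduce triangularity to a check on the generator pairs $(v_i,v_j)$, obtain the identity $(M_{g^{-1}})_{ij}=(M_g)_{ji}$, and derive Part~(2) from Theorem~\ref{7.2} together with the normalizations of Section~8.2; your explicit justification for the reduction to generators (via the convolution-inverse reformulation $\tau(b,a)=\tau(\cmdcalS(a),b)$) is a correct addition that the paper leaves implicit. One slip to fix: from $\Delta(v_i)=v_i\otimes 1+g_i^{-1}\otimes v_i$ the antipode is $\cmdcalS(v_i)=-g_i v_i$, not $v_i g_i$; this happens not to affect your pairing values because in the only nonvanishing case $g_j=g_i^{-1}$ the extra factor $-\tau_0(g_i,g_j^{-1})=-\tau_0(g,g)$ equals $1$, but the formula as written is incorrect.
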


\begin{proof}
  (1) One sees from Lemma \ref{8.1} that $ D(H)/ \cmdfraka $ is triangular if and only if
  \begin{equation*}
    \tau ( v_i , \zeta ( v_j ) ) + \tau ( g^{-1}_j , g_i ) \tau ( v_j , \zeta ( v_i ) ) = 0 \quad ( i , j \in \cmdcalI ).
  \end{equation*}
  This last condition is equivalent to (\ref{eq8.7}).

  (2) This follows from (1) and the argument above.
\end{proof}

\begin{remark} \label{8.3}
  Keep the notation as above.
  \begin{enumerate}
    \item $ D(H)/ \cmdfraka $ is, as a Hopf algebra, isomorphic to the one given in (\ref{eq8.5}).
    The latter is precisely Gelaki's $ H ( \cmdcalD ) $ which is associated to the data,
    denoted by $ \cmdcalD $ in \cite{G},
    consisting of $G$, $\tau_0$, $(n_g)$ (our $ W_g $ is denoted by $V_g$ in \cite{G}).
    Part 2 of the preceding proposition essentially coincides with Gelaki's classification \cite[Theorem 5.1]{G} of minimal triangular pointed Hopf algebras.
    \item Let $ B = \cmdfrakB ( V ) \cmddotrtimes G $, $ H = \cmdfrakB ( W ) \cmddotrtimes G $.
    By (1), $ H = D(H)/ \cmdfraka = H ( \cmdcalD ) $ as Hopf algebras.
    We see by modifying the argument above that the minimal triangular structures on $H$ are in 1-1 correspondence with the pairs $ ( \varphi , ( M_g ) ) $ consisting of such a group isomorphism $ \displaystyle \varphi : G \cmdarrow^{\simeq} G $ and linear isomorphisms $ \displaystyle M_g : V_g \cmdarrow^{\simeq} W_{g^{-1}} $ that satisfy (\ref{eq8.7}) and
    \begin{equation*}
      \varphi ( g_i ) = g_i \ ( i \in \cmdcalI ) , \quad \tau_0 ( f , \varphi ( g ) ) = \tau_0 ( \varphi ( f ) , g ) \ ( f , g \in G ).
    \end{equation*}
    To such a pair is associated the structure which is inherited from $ D ( H ) = B \cmdbicross_\tau H $ via the Hopf algebra map $ \pi : B \cmdbicross_\tau H \rightarrow H $ determined by
    \begin{equation*}
      \pi \! \mid_H = \cmdid {}_H , \quad \pi ( g \otimes 1 ) = \varphi ( g ) \ ( g \in G ) , \quad \pi ( v ) = M_g v \ ( v \in V_g ).
    \end{equation*}
    Interpret this result by regarding $ \varphi $, $ M_g $ as isomorphisms $ \displaystyle \widehat{G} \cmdarrow^{\simeq} G $, $ \displaystyle W^*_g \cmdarrow^{\simeq} W_{g^{-1}} $ via the identifications $ G = \widehat{G} $, $ V_g = W^*_g $ given by $ \tau^l $.
    The obtained result coincides with Gelaki's classification \cite[Theorem 4.4]{G} of minimal triangular structures on $ H ( \cmdcalD ) $.
    We see from our construction that if we fix $ \varphi $,
    two systems $ ( M_g ) $, $ ( M'_g ) $ of linear isomorphisms give isomorphic minimal triangular structures on $ H = H ( \cmdcalD ) $.
  \end{enumerate}
\end{remark}

\section*{Acknowledgements}
  I gratefully acknowledge that this work was supported by Grant-in-Aid for Scientific Research (C) 20540036 from the Japan Society for the Promotion of Science,
  and by Renewed-Research-Study Program of the Alexander von Humboldt Foundation.
  Part of this work was done while I was staying at the LMU of Munich, June-July 2009.
  I thank Prof. Hans-J\"{u}rgen Schneider for his very warm hospitality. 
  I also thank the referee for his or her valuable suggestions and helpful comments.


\end{document}